\newcommand*{\mailto}[1]{\href{mailto:#1}{\nolinkurl{#1}}}
\newtheorem{theorem}{Theorem}[section]
\newtheorem{proposition}[theorem]{Proposition}
\newtheorem{lemma}[theorem]{Lemma}
\theoremstyle{definition}
\newtheorem{remark}[theorem]{Remark}
\numberwithin{equation}{section}
\begin{document}

\title[Perturbations of periodic Sturm--Liouville operators]{Perturbations of periodic Sturm--Liouville operators}

\author[J. Behrndt]{Jussi Behrndt}
\address{Technische Universit\"{a}t Graz\\
Institut f\"ur Angewandte Mathematik\\
Steyrergasse 30\\
8010 Graz, Austria}
\email{\mailto{behrndt@tugraz.at}}
\urladdr{\url{https://www.math.tugraz.at/~behrndt/}}

\author[P. Schmitz]{Philipp Schmitz}
\address{Department of Mathematics\\
	Technische Universit\"at Ilmenau\\ Postfach 100565\\
	98648 Ilmenau\\ Germany}
\email{\mailto{philipp.schmitz@tu-ilmenau.de}}
\urladdr{\url{https://www.tu-ilmenau.de/universitaet/fakultaeten/fakultaet-mathematik-und-naturwissenschaften/profil/institute-und-fachgebiete/institut-fuer-mathematik/profil/fachgebiet-angewandte-funktionalanalysis/team}}

\author[G. Teschl]{Gerald Teschl}
\address{Faculty of Mathematics\\
Nordbergstrasse 15\\ 1090 Wien\\ Austria\\ and
International Erwin Schr\"odinger Institute for Mathematical Physics\\
Boltzmanngasse 9\\ 1090 Wien\\ Austria}
\email{\mailto{Gerald.Teschl@univie.ac.at}}
\urladdr{\url{https://www.mat.univie.ac.at/~gerald/}}

\author[C. Trunk]{Carsten Trunk}
\address{Department of Mathematics\\
Technische Universit\"at Ilmenau\\ Postfach 100565\\
98648 Ilmenau\\ Germany}
\email{\mailto{carsten.trunk@tu-ilmenau.de}}
\urladdr{\url{https://www.tu-ilmenau.de/universitaet/fakultaeten/fakultaet-mathematik-und-naturwissenschaften/profil/institute-und-fachgebiete/institut-fuer-mathematik/profil/fachgebiet-angewandte-funktionalanalysis}}

\subjclass[2020]{Primary 34L05, 81Q10; Secondary 34L40, 47E05}
\keywords{Periodic Sturm--Liouville operators, perturbations, essential spectrum, absolutely continuous spectrum, spectral gaps, discrete eigenvalues}

\begin{abstract} We study perturbations of the self-adjoint periodic Sturm--Liouville operator
\[
	A_0 = \frac{1}{r_0}\left(-\frac{\mathrm d}{\mathrm dx} p_0 \frac{\mathrm d}{\mathrm dx} + q_0\right)
\]
and conclude under $L^1$-assumptions on the differences of the coefficients that the essential spectrum and absolutely continuous spectrum remain the same.
If a finite first moment condition holds for the differences of the coefficients, then at most finitely many eigenvalues appear in the spectral
gaps. This observation extends a seminal result by Rofe-Beketov from the 1960s. Finally, imposing a second moment condition we show that
the band edges are no eigenvalues of the perturbed operator.
\end{abstract}

\maketitle

\section{Introduction}

Consider a periodic Sturm--Liouville
differential expression of the form
\begin{equation*}
\tau_0=\frac{1}{r_0}\left(-\frac{\mathrm d}{\mathrm dx}p_0\frac{\mathrm d}{\mathrm dx}+q_0\right)
\end{equation*}
on $\mathbb R$, where  $1/p_0,q_0,r_0 \in L^1_{\mathrm{loc}}(\mathbb R)$ are real-valued and $\omega$-periodic, and $r_0> 0$, $p_0>0$ a.\,e.
Let $A_0$ be the corresponding self-adjoint operator in the weighted $L^2$-Hilbert space $L^2(\mathbb R;  r_0)$ and recall that
the spectrum of $A_0$ is semibounded from below,
purely absolutely continuous and consists of
(finitely or infinitely many) spectral bands;
cf.\ \cite{BrownEasthamSchmidt13}, \cite{K16} or \cite[Section 12]{Weidmann87}.

Now let $1/p_1,q_1,r_1 \in L^1_{\mathrm{loc}}(\mathbb R)$ be real-valued with $r_1> 0$, $p_1>0$ a.\,e., assume that the condition
\begin{align}
\label{xx1}
\int_{\mathbb R} \left(\lvert r_1(t)-r_0(t)\rvert + \left\lvert\frac{1}{p_1(t)}-\frac{1}{p_0(t)}\right\rvert + \lvert q_1(t)-q_0(t)\rvert \right)\lvert t\rvert^k\,\mathrm dt <\infty
\end{align}
holds for $k=0$ in a first step, and consider the corresponding perturbed Sturm--Liouville
differential expression
\begin{equation*}
\tau_1=\frac{1}{r_1}\left(-\frac{\mathrm d}{\mathrm dx}p_1\frac{\mathrm d}{\mathrm dx}+q_1\right)
\end{equation*}
on $\mathbb R$. It turns out that $\tau_1$ is in the limit point case at both singular endpoints $\pm\infty$ and hence there is a unique self-adjoint realization $A_1$
of $\tau_1$ in the weighted $L^2$-Hilbert space $L^2(\mathbb R;  r_1)$. The first observation in Theorem~\ref{thm1} below is that
the essential spectra of $A_0$ and $A_1$ coincide and the interior is purely absolutely continuous spectrum of $A_1$.
In the special case $r_0=r_1=p_0=p_1=1$ this result is known from \cite{S91} and for $p_0\not=p_1$ a related result is contained in
\cite{BrownEasthamSchmidt13}; cf. Remark~\ref{jussirem}.

\begin{theorem}\label{thm1}
Assume that condition \eqref{xx1} holds for $k=0$ and let
$A_0$ and $A_1$ be the self-adjoint realizations of $\tau_0$ and $\tau_1$ in $L^2(\mathbb R;  r_0)$ and $L^2(\mathbb R;  r_1)$, respectively. Then
we have
\begin{equation*}
\sigma_{\mathrm{ess}}(A_0) = \sigma_{\mathrm{ess}}(A_1),
\end{equation*}
the spectrum of $A_1$ is purely absolutely continuous in
the interior of the spectral bands,
and
$A_1$ is semibounded from below.
\end{theorem}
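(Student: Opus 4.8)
\emph{Plan of the proof.} The first step is to rewrite, for fixed $\lambda\in\mathbb C$, the equations $(\tau_j-\lambda)u=0$, $j=0,1$, as first order systems for $(u,p_ju')^{\top}$; the system matrices of the $\tau_0$- and the $\tau_1$-equation then differ by a matrix whose nonzero entries are $1/p_1-1/p_0$, $q_1-q_0$ and $\lambda(r_1-r_0)$, all in $L^1(\mathbb R)$ by \eqref{xx1} with $k=0$. So near $\pm\infty$ the $\tau_1$-system is an $L^1$-perturbation of the $\tau_0$-system, and one applies the classical asymptotic integration theory for such perturbations (variation of parameters together with a Gronwall estimate, i.e.\ a Levinson-type theorem, using the stable/unstable splitting where the unperturbed system has exponentially separated solutions). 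Combined with Floquet theory for the periodic $\tau_0$, this is meant to produce the two facts on which everything rests:
(a) for $\lambda$ in an open spectral gap of $A_0$ the equation $(\tau_1-\lambda)u=0$ has one solution lying in $L^2(\mathbb R;r_1)$ near $+\infty$ and asymptotic there to the exponentially decaying Floquet solution of $(\tau_0-\lambda)u=0$, and a second, exponentially growing solution, and similarly near $-\infty$;
(b) for $\lambda$ in the interior of a band of $A_0$ every solution of $(\tau_1-\lambda)u=0$ is bounded and bounded below in $L^2(\mathbb R;r_1)$-mean over one period.
In addition, the Volterra equations defining the solutions in (a) should be solvable uniformly for $\lambda$ in a complex neighbourhood of the gap, making those solutions analytic in $\lambda$ there.

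\emph{Limit point case and absolutely continuous spectrum.} Applying (a) to a $\lambda$ below $\sigma(A_0)$ yields a solution of $(\tau_1-\lambda)u=0$ that grows exponentially, hence is not in $L^2(\mathbb R;r_1)$ at the relevant endpoint; thus $\tau_1$ is in the limit point case at $\pm\infty$ and $A_1$ is its unique self-adjoint realization. For $\lambda$ in the interior of a band, (b) shows that no solution of $(\tau_1-\lambda)u=0$ is subordinate at $+\infty$, and none is subordinate at $-\infty$, so subordinacy theory (the Gilbert--Pearson theorem in its Sturm--Liouville, weighted, whole-line form) gives that $A_1$ is purely absolutely continuous on $\operatorname{int}\sigma(A_0)$ and that $\operatorname{int}\sigma(A_0)\subseteq\sigma_{\mathrm{ac}}(A_1)$.

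\emph{Essential spectrum.} Since $A_0$ is purely absolutely continuous, $\sigma_{\mathrm{ess}}(A_0)=\sigma(A_0)$, and since the bands are non-degenerate intervals, $\sigma(A_0)=\overline{\operatorname{int}\sigma(A_0)}$; as $\sigma_{\mathrm{ess}}(A_1)$ is closed and contains $\sigma_{\mathrm{ac}}(A_1)\supseteq\operatorname{int}\sigma(A_0)$, the inclusion $\sigma_{\mathrm{ess}}(A_0)\subseteq\sigma_{\mathrm{ess}}(A_1)$ follows. For the converse it is enough to show $A_1$ has no essential spectrum in an open gap $G$ of $A_0$. On $G$ the Floquet discriminant satisfies $|\Delta(\lambda)|>2$, so the exponentially decaying Floquet solution of $(\tau_0-\lambda)u=0$ at $+\infty$ is analytic in $\lambda$ on a complex neighbourhood of $G$; by (a) it induces a solution $\psi_{1,+}(\cdot,\lambda)$ of $(\tau_1-\lambda)u=0$ lying in $L^2(\mathbb R;r_1)$ near $+\infty$ and analytic in $\lambda$, and likewise $\psi_{1,-}(\cdot,\lambda)$ lying in $L^2(\mathbb R;r_1)$ near $-\infty$ and analytic in $\lambda$. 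Their Wronskian $W(\lambda)$, formed with the $p_1$-quasiderivative, is then analytic in $\lambda$ near $G$ and nonzero for $\lambda\in\mathbb C\setminus\mathbb R$ (as $A_1$ is self-adjoint), hence $W\not\equiv0$ and its zeros in $G$ are isolated. For $\lambda\in G$ with $W(\lambda)\neq0$ the $\psi_{1,\pm}$ are linearly independent Weyl solutions and one assembles $(A_1-\lambda)^{-1}$ from them, so $\lambda\in\rho(A_1)$; hence $G\cap\sigma(A_1)$ consists only of isolated eigenvalues and $G\cap\sigma_{\mathrm{ess}}(A_1)=\emptyset$. This gives $\sigma_{\mathrm{ess}}(A_1)\subseteq\sigma(A_0)=\sigma_{\mathrm{ess}}(A_0)$, and together with the first inclusion $\sigma_{\mathrm{ess}}(A_1)=\sigma_{\mathrm{ess}}(A_0)$.

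\emph{Semiboundedness and the main obstacle.} For $\lambda<\min\sigma(A_0)$ the equation $(\tau_0-\lambda)u=0$ is disconjugate and has a positive solution, so by the asymptotic comparison $(\tau_1-\lambda)u=0$ has a solution positive near $+\infty$ and one positive near $-\infty$; thus $\tau_1-\lambda$ is non-oscillatory at both endpoints and, by classical oscillation theory, $A_1$ has only finitely many eigenvalues below each such $\lambda$. Were $A_1$ unbounded below it would have eigenvalues tending to $-\infty$, hence infinitely many below $\min\sigma(A_0)-1$, contradicting the previous sentence; so $\inf\sigma(A_1)>-\infty$. The hard part of this programme is the asymptotic integration underlying (a) and (b): running the Volterra/Levinson scheme under the sole hypothesis $1/p_j,q_j,r_j\in L^1_{\mathrm{loc}}(\mathbb R)$ — so that solutions are only in $W^{1,1}_{\mathrm{loc}}$ and all computations must be phrased via quasiderivatives — and extracting dependence on $\lambda$ that is uniform on compact sets and analytic across the gaps. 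The remaining ingredients (Floquet theory, subordinacy theory, oscillation theory) are classical.
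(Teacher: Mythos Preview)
Your proposal is essentially correct and shares the core ingredients with the paper: rewrite as a first-order system, compare solutions via Volterra/Levinson asymptotic integration (this is the paper's Lemma~\ref{lachs}), use subordinacy theory for the absolutely continuous spectrum, and oscillation theory for semiboundedness. Two differences are worth noting.

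First, the paper does not work directly on the line but reduces to half-line problems (Theorem~\ref{thm1a}) and then uses a finite-rank coupling argument. This buys extra freedom: on a half-line there is a one-parameter family of self-adjoint realizations, all sharing the same essential spectrum. Second, and relatedly, for the inclusion $\sigma_{\mathrm{ess}}(A_1)\subseteq\sigma_{\mathrm{ess}}(A_0)$ you invoke analyticity in $\lambda$ of the Weyl solutions and their Wronskian, whereas the paper proceeds more directly: for each fixed real $\lambda$ in a gap it writes down the Green's function built from the exponentially decaying and growing solutions and verifies, via the Schur test, that the resulting integral operator is bounded on $L^2((a,\infty);r_1)$; hence it is the resolvent at $\lambda$ of \emph{some} self-adjoint half-line realization, and $\lambda\notin\sigma_{\mathrm{ess}}$. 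This avoids the analyticity-in-$\lambda$ machinery entirely, and it sidesteps the zeros of $W$ (which on the full line are genuine eigenvalues of $A_1$, but on the half-line can be moved by adjusting the boundary condition). Your route works, but note that your step ``assemble $(A_1-\lambda)^{-1}$'' for real $\lambda$ still requires a boundedness argument of exactly this Schur type, so the analyticity layer is a detour rather than a shortcut.

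A minor point: your limit-point argument via a gap $\lambda$ asserts that the second solution ``grows exponentially'', but (a) as you stated it only gives an exponential \emph{upper} bound on the second solution; you would need the constant Wronskian together with the exponential decay of $(u_1,p_1u_1')$ to force growth and then argue it is not in $L^2(r_1)$. The paper instead reads off the limit-point property from the band-interior estimate (your item (b)): no solution is square-integrable there, which is the more direct argument.
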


In particular, the band structure of the spectrum of the periodic operator $A_0$ is preserved for the essential spectrum of $A_1$ and in the gaps
of $\sigma_{\mathrm{ess}}(A_1)$ discrete eigenvalues may appear that may accumulate to the edges of the spectral bands; for a
detailed discussion in the case $r_0=r_1$ we refer to \cite[Section 5.3]{BrownEasthamSchmidt13}.
Our second main objective in this note is to verify that under a finite first moment condition on the difference of the coefficients
there are at most finitely many discrete eigenvalues in the gaps of the
essential spectrum of $A_1$. The question whether eigenvalues accumulate at the band edges has a long
tradition going back to the seminal results of Rofe-Beketov \cite{ROFE-BEKETOV}, which were later extended by Schmidt \cite{S00} (see
also \cite[\S 5.4]{BrownEasthamSchmidt13} for the special case $r_0=r_1=1$ and $p_0=p_1$). They play also an important role for the
scattering theory in this setting \cite{F1,F2,F3, G13}.
The currently best results in this direction can be found in \cite{kt3}, which apply in the special case $r_0=r_1$.

\begin{theorem}\label{thm2}
Assume that condition \eqref{xx1} holds for $k=1$ and let
$A_0$ and $A_1$ be the self-adjoint realizations of $\tau_0$ and $\tau_1$ in $L^2(\mathbb R;  r_0)$ and $L^2(\mathbb R;  r_1)$, respectively.
Then every gap of the spectral bands $\sigma_{\mathrm{ess}}(A_0) =\sigma_{\mathrm{ess}}(A_1)$ contains at most finitely many eigenvalues of $A_1$.
\end{theorem}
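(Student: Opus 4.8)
The approach follows the classical scheme of Rofe-Beketov: characterize the eigenvalues of $A_1$ in a fixed spectral gap as the zeros of an analytic Wronskian-type function, and then control that function at the band edges, which is precisely where the first moment condition \eqref{xx1} with $k=1$ enters. Fix a gap $(\mu_-,\mu_+)$ of $\sigma_{\mathrm{ess}}(A_0)=\sigma_{\mathrm{ess}}(A_1)$; the lowest, semi-infinite gap $(-\infty,\min\sigma_{\mathrm{ess}}(A_0))$ is treated in the same way, with accumulation of eigenvalues at $-\infty$ excluded by the semiboundedness of $A_1$ from Theorem~\ref{thm1}. For $\lambda$ in the open gap, Floquet theory for the periodic equation $\tau_0u=\lambda u$ yields a fundamental system of solutions $e^{\mp\beta(\lambda)x}\chi_\pm(\lambda,x)$ with $\chi_\pm$ $\omega$-periodic in $x$, $\beta(\lambda)>0$, all analytic in $\lambda$. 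Comparing the first-order systems for $(u,p_ju')$ associated with $\tau_ju=\lambda u$, their coefficient difference $V_\lambda$ is built from $1/p_1-1/p_0$, $q_1-q_0$ and $\lambda(r_1-r_0)$, so $\|V_\lambda\|\in L^1(\mathbb R)$, uniformly for $\lambda$ in bounded sets, by \eqref{xx1} with $k=0$. A variation-of-parameters argument using the periodic Floquet solutions as reference solutions — the corresponding Volterra equations converging because of the resulting exponential dichotomy — then produces nontrivial solutions $\psi_\pm(\lambda,\cdot)$ of $\tau_1u=\lambda u$, analytic in $\lambda$, asymptotic to $e^{\mp\beta(\lambda)x}\chi_\pm(\lambda,x)$ as $x\to\pm\infty$, which span up to scalars the solutions lying in $L^2$ near $\pm\infty$. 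Since $\tau_1$ is in the limit point case at $\pm\infty$, a point $\lambda$ in the gap is an eigenvalue of $A_1$ exactly when $\psi_+(\lambda,\cdot)$ and $\psi_-(\lambda,\cdot)$ are linearly dependent, i.e.\ when the ($x$-independent) modified Wronskian
\[
W(\lambda):=\psi_-(\lambda,x)\,(p_1\psi_+')(\lambda,x)-(p_1\psi_-')(\lambda,x)\,\psi_+(\lambda,x)
\]
vanishes, and each such eigenvalue is simple. The function $W$ is analytic on $(\mu_-,\mu_+)$ and not identically zero — otherwise the whole gap would consist of eigenvalues of $A_1$, contradicting the countability of the point spectrum of a self-adjoint operator on a separable Hilbert space — so it has only isolated zeros in the open gap.

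It remains to exclude accumulation of these zeros at the band edges, and this is the technical core of the proof. Near a band edge $E\in\{\mu_-,\mu_+\}$ one has $\beta(\lambda)\to0$ as $\lambda\to E$; at $\lambda=E$ the monodromy matrix of the periodic equation has a nontrivial Jordan block, so $\tau_0u=Eu$ has one bounded (periodic or antiperiodic) solution and a second, linearly independent one growing at most linearly, and the Floquet data near $E$ are analytic in a fixed fractional power $\zeta=(\pm(\lambda-E))^{1/m}$ of $\lambda-E$. The key point is that the Volterra construction of $\psi_\pm$ still converges up to and including $\lambda=E$, where the exponential decay/growth of the reference solutions has degenerated to polynomial behaviour: the estimates now need the finiteness of the first moment $\int_{\mathbb R}\|V_E(t)\|\,(1+|t|)\,\mathrm dt$, which is exactly what \eqref{xx1} with $k=1$ provides. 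Consequently $W$, viewed as a function of $\zeta$, extends analytically across $\zeta=0$, and an analytic function that does not vanish identically has only finitely many zeros near $\zeta=0$.

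Combining this with the analyticity of $W$ on the complement of small neighbourhoods of $\mu_\pm$ inside the gap, which is a compact interval, $W$ has at most finitely many zeros in $(\mu_-,\mu_+)$. Hence $A_1$ has at most finitely many eigenvalues in the gap, as claimed.

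The only non-routine step is the second paragraph: carrying the construction of $\psi_\pm$, and with it the analyticity of $W$, all the way to the band edges, where the exponential dichotomy of the periodic equation degenerates and the competing solution grows only polynomially. This is the Rofe-Beketov mechanism, and the first moment condition \eqref{xx1} with $k=1$ is precisely calibrated to it; under the weaker assumption $k=0$ of Theorem~\ref{thm1} one has no control on $W$ at the band edges, and eigenvalues may in fact accumulate there. When the problem can be reduced to a Schrödinger operator the statement also follows from the results of Rofe-Beketov \cite{ROFE-BEKETOV} and Schmidt \cite{S00}, and in the case $r_0=r_1$ the relative oscillation approach of \cite{kt3} gives an alternative proof.
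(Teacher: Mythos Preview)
Your argument is correct and follows the classical Rofe-Beketov scheme, but it is genuinely different from the paper's proof. The paper does not work with the $\lambda$-analytic Jost Wronskian $W(\lambda)$ at all. Instead it decouples the line at a point $a$ into two half-lines and reduces to the half-line statement (Theorem~\ref{thm2a}); that in turn is proved via \emph{renormalized oscillation theory}. Fixing a gap $(\mu,\lambda)$ with $\mu,\lambda\in\partial\sigma_{\mathrm{ess}}(H_0)\cup(\mathbb R\setminus\sigma_{\mathrm{ess}}(H_0))$, the paper studies the $x$-dependent modified Wronskians $W_j(x)=W(u_j(\cdot,\mu),u_j(\cdot,\lambda))(x)$, $j=0,1$, where $u_j(\cdot,\mu),u_j(\cdot,\lambda)$ are the decaying/bounded solutions from Lemmas~\ref{pingpong} and~\ref{lachs}. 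By \cite{GesztesySimonTeschl96} the number of eigenvalues of $H_j$ in $(\mu,\lambda)$ is governed by the zeros of $W_j$ in $x$. Periodicity forces $W_0$ to have no zeros at all, and the first moment condition \eqref{xx1b}---entering exactly through Lemma~\ref{lachs}\,(iii), the same band-edge Volterra estimate you invoke---yields $e^{(c(\lambda)+c(\mu))(x-a)/\omega}(W_0(x)-W_1(x))\to 0$, so $W_1$ can have only finitely many zeros. A rank-two resolvent comparison then transfers this to $A_1$ on $\mathbb R$.

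So both proofs feed the moment condition into the same place (the construction of the perturbed solution at a band edge, where the second Floquet solution grows linearly), but they count eigenvalues by different mechanisms: you use analyticity in the spectral parameter on a local Riemann-surface chart $\zeta=(\pm(\lambda-E))^{1/m}$, while the paper uses oscillation theory in the spatial variable together with the zero-free periodic comparison Wronskian. Your route is more in the spirit of scattering theory and gives as a byproduct the Jost solutions $\psi_\pm$, but it requires the analytic structure of the Floquet data near the branch points, which you sketch rather than prove. The paper's route avoids the Riemann-surface analysis entirely at the cost of importing the Gesztesy--Simon--Teschl oscillation theorem \cite{GesztesySimonTeschl96} and the half-line coupling; it stays at the level of real solutions and pointwise asymptotics.
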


In the third result we pay special attention to the edges of the spectral bands. If \eqref{xx1} holds for $k=1$ (and hence also for $k=0$), then the interior of the
spectral bands consists of purely absolutely continuous spectrum of $A_1$ and the eigenvalues of $A_1$ in the gaps do not accumulate
to the band edges. If we further strengthen the assumptions and impose a finite second moment condition $k=2$ in \eqref{xx1} (and hence also $k=1$ and $k=0$),
then it turns out that the band edges are no eigenvalues of $A_1$.

\begin{theorem}\label{thm3}
Assume that condition \eqref{xx1} holds for $k=2$ and let
$A_0$ and $A_1$ be the self-adjoint realizations of $\tau_0$ and $\tau_1$ in $L^2(\mathbb R;  r_0)$ and $L^2(\mathbb R;  r_1)$, respectively.
Then the edges of the spectral bands $\sigma_{\mathrm{ess}}(A_0) =\sigma_{\mathrm{ess}}(A_1)$ are no eigenvalues of $A_1$ and the spectral bands
consist of purely absolutely continuous spectrum of $A_1$.
\end{theorem}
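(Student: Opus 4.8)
The plan is to reduce Theorem~\ref{thm3} to the statement that no edge of a spectral band is an eigenvalue of $A_1$, and then to prove the latter by a Levinson-type asymptotic analysis of the solutions of $\tau_1 u=\lambda_0 u$ at $+\infty$ in the Floquet basis of the periodic equation at $\lambda_0$. For the reduction, recall from Theorem~\ref{thm1} that the part of the spectrum of $A_1$ in the interior of the bands is purely absolutely continuous; since the set $\mathcal E$ of band edges is countable, the vanishing of the spectral projections $E_{A_1}(\{\lambda_0\})$ for all $\lambda_0\in\mathcal E$ (i.e.\ that no band edge is an eigenvalue) already implies $E_{A_1}(\mathcal E)=0$, and hence that the part of the spectrum of $A_1$ on the closed bands agrees with its purely absolutely continuous part in the interior; in particular there is then no singular continuous spectrum inside the bands, as it could only be carried by the countable set $\mathcal E$. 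So from now on I fix a band edge $\lambda_0$ and show that $\tau_1 u=\lambda_0 u$ admits no nontrivial solution that is square integrable near $+\infty$, which a fortiori excludes eigenfunctions of $A_1$ at $\lambda_0$.

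Passing to the first order systems for $(u,p_1u')^\top$ and $(u,p_0u')^\top$, the equation $\tau_1 u=\lambda_0 u$ takes the form $y'=(\mathcal A_0(\cdot)+B(\cdot))y$, where $y'=\mathcal A_0(\cdot)y$ is the periodic system associated with $\tau_0 u=\lambda_0 u$ and
\[
 B=\begin{pmatrix}0 & 1/p_1-1/p_0\\ (q_1-q_0)-\lambda_0(r_1-r_0) & 0\end{pmatrix},
\]
so that $\lVert B(x)\rVert$ is bounded by a constant times the integrand in \eqref{xx1}. At the band edge $\lambda_0$, Floquet theory (cf.\ \cite{BrownEasthamSchmidt13,Weidmann87}) provides a fundamental matrix $\Psi$ of the periodic system whose columns $\psi_1,\psi_2$ can be chosen so that $\psi_1$ is $\omega$-periodic or $\omega$-antiperiodic (hence bounded), while $\psi_2$ is either also $\omega$-periodic/antiperiodic (if the monodromy matrix at $\lambda_0$ equals $\pm I$, i.e.\ at a closed gap) or of the form $\psi_2(x)=\tfrac x\omega\,(\pm1)^{x/\omega}\psi_1(x)+(\text{bounded})$ (the generic Jordan-block case). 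In either case $\lVert\Psi(x)\rVert\le C(1+|x|)$, and since $\det\Psi\equiv W$ is a nonzero constant, also $\lVert\Psi(x)^{-1}\rVert\le C(1+|x|)$. Finally, the first (scalar) component of $\psi_1$ is a nontrivial solution of $\tau_0 u=\lambda_0 u$ and cannot vanish on an interval; writing $\psi_1,\psi_2$ henceforth also for these scalar first components, one has $\lVert\psi_1\rVert_{L^2(0,\omega)}>0$, and $\psi_1,\psi_2$ are linearly independent as functions.

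Now let $u$ solve $\tau_1 u=\lambda_0 u$ and write $(u,p_1u')^\top=\Psi(\cdot)c(\cdot)$, so that $c'=M(\cdot)c$ with $M=\Psi^{-1}B\Psi$. From the crude bound $\lVert M(x)\rVert\le C(1+|x|)^2\lVert B(x)\rVert$ and \eqref{xx1} for $k=2$ we get $\int_{x_0}^{\infty}\lVert M(t)\rVert\,dt<\infty$ for every $x_0$; hence $c$ is bounded and $c(x)\to c_\infty$ for some constant vector $c_\infty=(c_\infty^{(1)},c_\infty^{(2)})$ as $x\to+\infty$, and a backward Gronwall estimate shows $c_\infty=0\Rightarrow c\equiv0\Rightarrow u\equiv0$. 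It remains to exclude $c_\infty\ne0$ when $u\in L^2$ near $+\infty$, i.e.\ when $\lVert u\rVert_{L^2(n\omega,(n+1)\omega)}\to0$. If $\psi_2$ grows linearly and $c_\infty^{(2)}\ne0$, then $u(x)=c_1(x)\psi_1(x)+c_2(x)\psi_2(x)$ with $c_j$ bounded, $c_2(x)\to c_\infty^{(2)}$ and $\psi_2(x)\sim\tfrac x\omega(\pm1)^{x/\omega}\psi_1(x)$, whence $\lVert u\rVert_{L^2(n\omega,(n+1)\omega)}\to\infty$ — impossible. If instead $c_\infty^{(2)}=0$ but $c_\infty^{(1)}\ne0$, the decisive point is that, because the relevant row of $\Psi^{-1}$ involves only the bounded solution $\psi_1$, the entries of $M$ in the equation $c_2'=M_{21}c_1+M_{22}c_2$ satisfy $\lvert M_{21}(x)\rvert\le C\lVert B(x)\rVert$ and $\lvert M_{22}(x)\rvert\le C(1+|x|)\lVert B(x)\rVert$; integrating from $+\infty$, where $c_2$ vanishes, gives $\lvert c_2(x)\rvert\le C\int_x^\infty(1+t)\lVert B(t)\rVert\,dt$, hence $\lvert x\,c_2(x)\rvert\le C\int_x^\infty(1+t)^2\lVert B(t)\rVert\,dt\to0$ by \eqref{xx1} for $k=2$. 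Consequently $c_2(x)\psi_2(x)\to0$, so $u(x)=c_\infty^{(1)}\psi_1(x)+o(1)$ and $\lVert u\rVert_{L^2(n\omega,(n+1)\omega)}\to\lvert c_\infty^{(1)}\rvert\,\lVert\psi_1\rVert_{L^2(0,\omega)}>0$, again a contradiction. (In the closed-gap case $\psi_2$ is bounded, so $\Psi,\Psi^{-1}$ are bounded, $M=O(\lVert B\rVert)$, $c\to c_\infty$ already under \eqref{xx1} for $k=0$, and $c_\infty\ne0$ yields $u(x)=z(x)+o(1)$ with $z=c_\infty^{(1)}\psi_1+c_\infty^{(2)}\psi_2$ a nontrivial $\omega$-periodic/antiperiodic function, so $\lVert u\rVert_{L^2(n\omega,(n+1)\omega)}\to\lVert z\rVert_{L^2(0,\omega)}>0$.) In all cases $c_\infty=0$, hence $u\equiv0$, so $\lambda_0$ is not an eigenvalue of $A_1$, and with the reduction above this proves Theorem~\ref{thm3}.

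The main obstacle — and the reason the full second moment $k=2$ is needed, whereas $k=1$ only yields the weaker Theorem~\ref{thm2} — is precisely the estimate in the case $c_\infty^{(2)}=0$: one must prevent the linearly growing Floquet solution $\psi_2$ from creeping into the asymptotics of $u$ through the small coefficient $c_2(x)$, and controlling $x\,c_2(x)$ forces a bound of type $\int(1+t)^2\lVert B(t)\rVert\,dt<\infty$. What makes this work is the finer structure of $M=\Psi^{-1}B\Psi$ at a band edge: in the Floquet basis the coefficients multiplying $c_2$ in the scalar equation for $c_2$ do not inherit the linear growth of $\psi_2$ that appears in the corresponding row of $\Psi^{-1}$. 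The remaining steps — the reduction to first order systems together with the absolute continuity of the quasiderivatives, the forward and backward Gronwall estimates, the equivalence of $L^2(\mathbb R;r_0)$ and $L^2(\mathbb R;r_1)$ near $\pm\infty$ (so that "square integrable near $+\infty$" is unambiguous), and the standard Floquet normal form at $\lambda_0$ — are routine, and the case distinctions (closed gap vs.\ generic band edge, and $c_\infty^{(2)}=0$ vs.\ $c_\infty^{(2)}\ne0$) only add bookkeeping.
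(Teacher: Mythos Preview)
Your argument is correct and reaches the same conclusion as the paper, but the organization is genuinely different. The paper first constructs, via a Volterra integral equation and a Neumann series (Lemma~\ref{lachs}~(iii)), two distinguished solutions $u_1,v_1$ of $(\tau_1-\lambda_0)u=0$ with $u_1-u_0\to0$ and, under \eqref{xx1c}, $v_1-v_0\to0$; it then invokes \eqref{blin} to see that every linear combination stays bounded away from $0$ in $L^2$ over periods. You instead pass to Floquet coordinates $c=\Psi^{-1}y$ and analyze the coefficient equation $c'=Mc$ directly, exploiting the structural fact that the second row of $\Psi^{-1}$ involves only the bounded Floquet solution, so that $|M_{21}|\le C\|B\|$ and $|M_{22}|\le C(1+|x|)\|B\|$; this is exactly what forces $x\,c_2(x)\to0$ and is the place where the second moment in \eqref{xx1} is genuinely consumed. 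The two routes are equivalent reformulations (variation of constants versus the integral equation $\xi=\phi+T\xi$), but your presentation is more self-contained for this particular theorem and makes the role of the $k=2$ condition very transparent, while the paper's Lemma~\ref{lachs} has the advantage of producing a reusable basis of perturbed solutions used in Theorems~\ref{thm1a}--\ref{thm3a}. One small caveat: your remark that $L^2(\mathbb R;r_0)$ and $L^2(\mathbb R;r_1)$ are ``equivalent near $\pm\infty$'' is not literally true under the hypotheses (only $r_1-r_0\in L^1$ is available, not two-sided bounds on $r_1/r_0$); what your argument actually uses---and what suffices---is that $\int_{n\omega}^{(n+1)\omega}r_1$ is bounded in $n$ and that $\int_{n\omega}^{(n+1)\omega}|\psi_1|^2(r_1-r_0)\to0$, both of which follow from $r_1-r_0\in L^1$ and boundedness of $\psi_1$.
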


In Section~\ref{sec2} we also show that the claim in Theorem~\ref{thm3} remains valid if \eqref{xx1} holds for $k=1$ and some other additional
assumptions for $r_1$ and $q_1$ are satisfied; cf. Proposition~\ref{propi}.
Our proofs of Theorems~\ref{thm1}--\ref{thm3} are based on a careful analysis of the solutions of $(\tau_0-\lambda)u=0$ and
$(\tau_1-\lambda)u=0$ for $\lambda\in\mathbb R$; cf.\ Lemma~\ref{pingpong} and Lemma~\ref{lachs}. While the properties of the solutions of the periodic
problem in Lemma~\ref{pingpong} are mainly consequences of well-known properties of the Hill discriminant, the properties of the solutions of the perturbed
problem in Lemma~\ref{lachs} require some slightly more technical arguments.
It is convenient to first verify variants of Theorems~\ref{thm1}--\ref{thm3}
for self-adjoint realizations of $\tau_0$ and $\tau_1$ on half-lines
$(-\infty,a)$ and $(a,\infty)$ with finite endpoint $a$, and use a coupling argument to conclude the corresponding results on $\mathbb R$. One
of the key ingredients is the connection of the zeros of a modified Wronskian with the finiteness of the spectrum from \cite{GesztesySimonTeschl96}.

\vskip 0.1cm
\noindent {\bf Acknowledgments.}
We are indebted to Fritz Gesztesy for very helpful discussions and literature hints.

\section{Perturbations of periodic Sturm--Liouville operators on a half-line}\label{sec2}

We prove
variants of Theorems~\ref{thm1}--\ref{thm3} for self-adjoint realizations $H_0$ and $H_1$ of $\tau_0$ and $\tau_1$, respectively,
in the $L^2$-spaces $L^2((a,\infty);r_0)$ and $L^2((a,\infty);r_1)$ with some finite endpoint $a$. For the real-valued coefficients we have
$1/p_j,q_j,r_j \in L^1_{\mathrm{loc}}([a,\infty))$ and $r_j> 0$, $p_j>0$ a.\,e., and $1/p_0,q_0,r_0$ are $\omega$-periodic.

The differential expression $\tau_0$ is in the limit point case at $\infty$ and regular at $a$. In the following
let $H_0$ be any self-adjoint realization of $\tau_0$ in $L^2((a,\infty);r_0)$.
Similar as in the full line case also
on the half-line the essential spectrum of $H_0$ is purely absolutely continuous and  consists of infinitely many closed intervals
\begin{equation}\label{harry}
\sigma_{\mathrm{ess}}(H_0) = \bigcup_{k=1}^\infty [\lambda_{2k-1},\lambda_{2k}],
\end{equation}
where the endpoints $\lambda_{2k-1}$ and $\lambda_{2k}$, $\lambda_{2k-1}<\lambda_{2k}$, denote the $k$-th eigenvalues of the regular Sturm--Liouville operator in $L^2((a,a+\omega);r_0)$ (in nondecreasing order) with
periodic and semiperiodic boundary conditions, respectively;
cf.\ \cite{BrownEasthamSchmidt13} or \cite[Section 12]{Weidmann87} for more details. Recall that the closed intervals may adjoin and that also
$\sigma_{\mathrm{ess}}(H_0)=[\lambda_1,\infty)$ may happen in \eqref{harry}.
Each interval $(-\infty,\lambda_1)$ and $(\lambda_{2k}, \lambda_{2k+1})$, $k\in\mathbb N$, may contain at most one (simple) eigenvalue of $H_0$.
In particular, $H_0$ is semibounded from below and \eqref{harry} implies that the interior of $\sigma_{\mathrm{ess}}(H_0)$ is non-empty.

\begin{theorem}\label{thm1a}
Assume that
\begin{align}\label{xx1a}
\int_{a}^\infty \left(\lvert r_1(t)-r_0(t)\rvert + \left\lvert\frac{1}{p_1(t)}-\frac{1}{p_0(t)}\right\rvert + \lvert q_1(t)-q_0(t)\rvert \right)\,\mathrm dt <\infty
\end{align}
and let
$H_0$ and $H_1$ be arbitrary self-adjoint realizations of $\tau_0$ and $\tau_1$ in $L^2((a,\infty);  r_0)$ and $L^2((a,\infty);  r_1)$, respectively. Then
we have
\begin{equation*}
\sigma_{\mathrm{ess}}(H_0) = \sigma_{\mathrm{ess}}(H_1),
\end{equation*}
the spectrum of $H_1$ is purely absolutely continuous in
the interior of the spectral bands,
and
$H_1$ is semibounded from below.
\end{theorem}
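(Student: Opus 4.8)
The plan is to reduce the half-line problem to a comparison of solutions of $(\tau_0-\lambda)u=0$ and $(\tau_1-\lambda)u=0$ for real $\lambda$ in the interior of a spectral band, and to exploit the well-known subordinacy/Gilbert--Pearson theory together with a perturbation-of-solutions argument. First I would transform $\tau_1$ into a form directly comparable to $\tau_0$. Writing the quasi-derivative $u^{[1]}=p_1 u'$, the equation $(\tau_1-\lambda)u=0$ becomes the first-order system
\begin{equation*}
\frac{\mathrm d}{\mathrm dx}\begin{pmatrix}u\\u^{[1]}\end{pmatrix}=\begin{pmatrix}0 & 1/p_1\\ q_1-\lambda r_1 & 0\end{pmatrix}\begin{pmatrix}u\\u^{[1]}\end{pmatrix},
\end{equation*}
and similarly for $\tau_0$. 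The difference of the two coefficient matrices is, entrywise, $1/p_1-1/p_0$, $q_1-q_0$, and $\lambda(r_0-r_1)$, all of which lie in $L^1((a,\infty))$ by \eqref{xx1a}. This is the structural heart of the argument: the perturbation is an $L^1$-perturbation of the first-order system.

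Next I would invoke Lemma~\ref{pingpong} (the periodic solution analysis) to obtain, for $\lambda$ in the interior of a band, a fundamental system of solutions of $(\tau_0-\lambda)u=0$ that is bounded on $(a,\infty)$ together with their quasi-derivatives (the Floquet solutions, which are bounded since $\lambda$ is not a band edge), and I would use a Levinson-type asymptotic theorem for $L^1$-perturbed linear systems to conclude that $(\tau_1-\lambda)u=0$ has a fundamental system of solutions $u_1^{\pm}$ with $u_1^\pm(x)=u_0^\pm(x)(1+o(1))$ as $x\to\infty$, in particular bounded and non-subordinate. Since for every $\lambda$ in the interior of the bands neither solution of $(\tau_1-\lambda)u=0$ is subordinate at $\infty$, subordinacy theory (adapted to the weighted, general-coefficient Sturm--Liouville setting) yields that $H_1$ has purely absolutely continuous spectrum there; hence the interior of $\sigma_{\mathrm{ess}}(H_0)$ is contained in the a.c.\ spectrum of $H_1$. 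Conversely, for $\lambda$ in the interior of a gap of $\sigma_{\mathrm{ess}}(H_0)$ the periodic equation has an exponentially decaying and an exponentially growing Floquet solution; the same Levinson argument shows $(\tau_1-\lambda)u=0$ has a solution behaving like the decaying one and one like the growing one, so exactly one solution is $L^2$ near $\infty$, forcing $\lambda\notin\sigma_{\mathrm{ess}}(H_1)$ (the resolvent difference argument: the limit-point deficiency index is $1$ and the Weyl disk shrinks to a point depending analytically on $\lambda$ off a discrete set). Combining both inclusions, together with closedness of the essential spectrum, gives $\sigma_{\mathrm{ess}}(H_0)=\sigma_{\mathrm{ess}}(H_1)$.

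Semiboundedness from below I would get from the quadratic form: for $\lambda$ very negative the Floquet exponent of $\tau_0$ is large, so the perturbed equation $(\tau_1-\lambda)u=0$ is disconjugate on some half-line $(b,\infty)$ (no solution has two zeros there), which by the classical oscillation criterion means $(-\infty,\lambda)$ contains no essential spectrum and only finitely many eigenvalues of $H_1$; since this holds for all sufficiently negative $\lambda$ and the essential spectrum is already pinned down, $H_1$ is bounded below. The main obstacle I anticipate is making the Levinson/asymptotic-integration step fully rigorous in the present generality: the coefficients $1/p_j,q_j,r_j$ are only $L^1_{\mathrm{loc}}$, so one must work with locally absolutely continuous quasi-derivatives rather than classical derivatives, and one must verify that the diagonalizing transformation that brings the unperturbed Floquet system to constant-coefficient form is bounded with bounded inverse uniformly on $(a,\infty)$ — this is exactly where the hypothesis ``$\lambda$ in the \emph{interior} of a band'' is used, since at a band edge the monodromy matrix is not diagonalizable and the transformation degenerates. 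I expect these technical solution estimates to be isolated into Lemma~\ref{lachs}, so that here the argument is mostly a matter of assembling subordinacy theory, the Weyl-disk/limit-point description of $\sigma_{\mathrm{ess}}$, and the oscillation-theoretic semiboundedness criterion.
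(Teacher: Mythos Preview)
Your proposal follows essentially the same architecture as the paper's proof: rewrite $(\tau_j-\lambda)u=0$ as first-order systems differing by an $L^1$ matrix, invoke Lemma~\ref{pingpong} for Floquet solutions, use a Levinson-type perturbation argument (which is precisely the content of Lemma~\ref{lachs}) to control solutions of the perturbed equation, apply subordinacy theory for the absolutely continuous spectrum, and use non-oscillation for semiboundedness. On those points your outline is correct and matches the paper.

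There is, however, a genuine gap in your argument for the inclusion $\sigma_{\mathrm{ess}}(H_1)\subset\sigma_{\mathrm{ess}}(H_0)$. You write that in a gap one obtains ``one solution behaving like the decaying one and one like the growing one, so exactly one solution is $L^2$ near $\infty$, forcing $\lambda\notin\sigma_{\mathrm{ess}}(H_1)$''. Two issues: first, Lemma~\ref{lachs}(i) does \emph{not} give a full asymptotic $v_1\sim v_0$ for the growing solution --- it only gives the upper bound \eqref{phone}. Second, and more importantly, the existence of one $L^2$ solution for each real $\lambda$ in the gap does not by itself exclude $\lambda$ from the essential spectrum: you have not ruled out that eigenvalues of $H_1$ accumulate there. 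Your parenthetical about the Weyl disk ``depending analytically on $\lambda$ off a discrete set'' would close this, but establishing that analyticity requires locally uniform-in-$\lambda$ control of the Levinson construction, which you have not supplied. The paper avoids this entirely by a direct route: it uses the exponential bounds \eqref{phone} on \emph{both} $u_1$ and $v_1$ to write down the Green's function explicitly, and then proves via the Schur criterion that the associated integral operator $S$ is bounded on $L^2((a,\infty);r_1)$. Since $S$ is then the resolvent at $\lambda$ of some self-adjoint realization of $\tau_1$, one gets $\lambda\notin\sigma_{\mathrm{ess}}(H_1)$ immediately. This Schur-test step is the one substantive ingredient missing from your sketch.

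A smaller point: for the subordinacy step, boundedness of all solutions is not enough --- you also need that no solution is \emph{small} in $L^2$ on average. The paper handles this via the lower bound \eqref{blin} from Lemma~\ref{pingpong}, transferring it to the perturbed solutions to obtain the two-sided estimate \eqref{jenawest}. Your outline implicitly assumes this but does not mention it.
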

It follows  that $H_1$ has non-empty essential spectrum, hence,
the differential expression $\tau_1$ is in the limit point case at $\infty$.

\begin{remark}\label{jussirem}
For the special case $r_0=r_1=p_0=p_1=1$ the result in
Theorem~\ref{thm1a} goes back to the paper \cite{S91} of G.\
Stolz, where
instead of the assumption $q_1-q_0\in L^1(a,\infty)$ in \eqref{xx1a} the
weaker conditions
\begin{equation}\label{con1}
\int_c^\infty \vert (q_1-q_0)(t+\omega)-(q_1-q_0)(t)\vert \,\mathrm dt
<\infty
\end{equation}
for some $c>a$ and
\begin{equation}\label{con2}
  \lim_{x\rightarrow\infty}\int_x^{x+1}\vert q_1(t)-q_0(t)\vert\,\mathrm
dt=0
\end{equation}
are imposed. The considerations from \cite{S91} are extended in
\cite[Chapter 5.2]{BrownEasthamSchmidt13} to the case $r_0=r_1$ and
$p_0\not=p_1$ with
$1/p_1 - 1/p_0$ satisfying similar conditions
\eqref{con1}--\eqref{con2}. More precisely, in \cite[Corollary
5.2.3]{BrownEasthamSchmidt13} it was shown that the interior
of the essential spectrum of $H_0$ is purely absolutely continuous
spectrum of $H_1$ and hence $\sigma_{\mathrm{ess}}(H_0) \subset
\sigma_{\mathrm{ess}}(H_1)$.
For the other inclusion in \cite[Theorem 5.3.1]{BrownEasthamSchmidt13} it is assumed that
$r_0=r_1$, $p_0=p_1$ together with additional limit conditions for $q_1-q_0$.
For details we refer to \cite[Chapter 5]{BrownEasthamSchmidt13}.
\end{remark}

In the next theorem we strengthen the assumptions by imposing a finite first moment condition
(see \eqref{xx1b} below)
on the differences of the coefficients; note that
\eqref{xx1b} implies \eqref{xx1a} since the coefficients (and their differences) are integrable at $a$.
In this situation it turns out that there appear at most finitely many simple eigenvalues of $H_1$ in each spectral gap and hence
there is no accumulation of eigenvalues to the edges of the band gaps. Concerning the history of this result we refer to the discussion before
the corresponding result on $\mathbb R$, Theorem~\ref{thm2}.

\begin{theorem}\label{thm2a}
Assume that
\begin{align}\label{xx1b}
\int_{a}^\infty \left(\lvert r_1(t)-r_0(t)\rvert + \left\lvert\frac{1}{p_1(t)}-\frac{1}{p_0(t)}\right\rvert + \lvert q_1(t)-q_0(t)\rvert \right)
\vert t\vert\,\mathrm dt <\infty
\end{align}
holds, and let $H_1$ be an arbitrary self-adjoint realization of $\tau_1$ in $L^2((a,\infty);  r_1)$.
Then every gap of $\sigma_{\mathrm{ess}}(H_1)$ contains at most finitely many eigenvalues.
\end{theorem}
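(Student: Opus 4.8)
The plan is to reduce the assertion to the absence of accumulation of eigenvalues of $H_1$ at the band edges, and to control the latter by a zero count of a modified Wronskian, comparing the solutions of $(\tau_1-\lambda)u=0$ with the Floquet solutions of $(\tau_0-\lambda)u=0$. First I would reduce: by Theorem~\ref{thm1a} the essential spectrum of $H_1$ is absent in an open gap $(\lambda_{2k},\lambda_{2k+1})$, so the eigenvalues of $H_1$ there are isolated and of finite multiplicity, and such a gap can contain infinitely many of them only if they accumulate at $\lambda_{2k}$ or at $\lambda_{2k+1}$; in the semi-infinite gap $(-\infty,\lambda_1)$, where $H_1$ is semibounded (again Theorem~\ref{thm1a}), the only possible accumulation point is $\lambda_1$, and finiteness there follows from a Bargmann-type bound on the number of eigenvalues below the bottom of the essential spectrum, which is finite precisely under~\eqref{xx1b}. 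It thus remains to exclude accumulation of eigenvalues of $H_1$ at a finite band edge from within an adjacent gap.

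Next I would bring in renormalized oscillation theory. Fix $E_1<E_2$ in the open gap with neither $E_1$ nor $E_2$ an eigenvalue of $H_0$ or of $H_1$, let $\psi_{j,-}(E_1,\cdot)$ be the solution of $(\tau_j-E_1)u=0$ obeying the boundary condition of $H_j$ at $a$ and $\psi_{j,+}(E_2,\cdot)$ the (up to scalars unique) solution of $(\tau_j-E_2)u=0$ lying in $L^2$ near $+\infty$ ($j=0,1$), and write $W(u,v)=uv^{[1]}-u^{[1]}v$ for the modified Wronskian. By the connection between zeros of modified Wronskians and eigenvalue counts of~\cite{GesztesySimonTeschl96},
\[
\#\bigl\{\text{eigenvalues of }H_1\text{ in }(E_1,E_2)\bigr\}
=\#\bigl\{x\in(a,\infty):W(\psi_{1,-}(E_1,\cdot),\psi_{1,+}(E_2,\cdot))(x)=0\bigr\},
\]
while the analogous count for $H_0$ is at most $1$. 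Since accumulation of eigenvalues of $H_1$ at a band edge would make the left-hand side unbounded as $E_1\downarrow\lambda_{2k}$ and $E_2\uparrow\lambda_{2k+1}$, it suffices to bound the number of zeros of $W(\psi_{1,-}(E_1,\cdot),\psi_{1,+}(E_2,\cdot))$ in $(a,\infty)$ by a constant independent of $E_1,E_2$.

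The key step is the comparison with the periodic problem, where Lemma~\ref{pingpong} and Lemma~\ref{lachs} enter. By Lemma~\ref{pingpong} the Floquet solutions $\psi_{0,\pm}(\lambda,\cdot)$ and their quasi-derivatives depend continuously on $\lambda$ up to the band edges, where they degenerate into the bounded (anti)periodic band-edge solution and its linearly growing partner, and — since the gap is open — the modified Wronskian of a subdominant and a dominant periodic solution at two energies of the gap has a periodic, nowhere vanishing leading profile; hence $|W(\psi_{0,-}(E_1,\cdot),\psi_{0,+}(E_2,\cdot))(x)|\ge c\,e^{(\gamma(E_1)-\gamma(E_2))x}$ for $x\ge x_0$, with $c>0$ and $x_0$ independent of $E_1,E_2$, where $\gamma(\lambda)\ge0$ is the Floquet exponent. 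By Lemma~\ref{lachs}, the first moment condition~\eqref{xx1b} yields $\psi_{1,+}(E_2,x)=\psi_{0,+}(E_2,x)(1+o(1))$, together with the same relation for quasi-derivatives, and $\psi_{1,-}(E_1,x)=\beta\,\psi_{0,-}(E_1,x)(1+o(1))$ with $\beta\ne0$, all with $o(1)$-bounds uniform in $E_1,E_2$ up to the edges — this uniformity being the point of the first moment, since at a band edge the linearly independent homogeneous solution grows linearly and the variation-of-parameters correction is governed by $\int_x^\infty(1+|t|)\bigl(|r_1-r_0|+|1/p_1-1/p_0|+|q_1-q_0|\bigr)(t)\,dt$. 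Combining, $W(\psi_{1,-}(E_1,\cdot),\psi_{1,+}(E_2,\cdot))=\beta\,W(\psi_{0,-}(E_1,\cdot),\psi_{0,+}(E_2,\cdot))+R$ with $|R(x)|\le\delta(x)\,e^{(\gamma(E_1)-\gamma(E_2))x}$ and $\delta(x)\to0$ uniformly, so there is a fixed $x_*$, independent of $E_1,E_2$, beyond which $W(\psi_{1,-}(E_1,\cdot),\psi_{1,+}(E_2,\cdot))$ does not vanish; on the fixed compact set $(a,x_*)$ its number of zeros is at most $1+\#\{\text{zeros of }\psi_{1,-}(E_1,\cdot)\text{ in }(a,x_*)\}+\#\{\text{zeros of }\psi_{1,+}(E_2,\cdot)\text{ in }(a,x_*)\}$, because between two consecutive zeros of the Wronskian its derivative $(E_1-E_2)r_1\,\psi_{1,-}(E_1,\cdot)\,\psi_{1,+}(E_2,\cdot)$ changes sign, and a solution of $(\tau_1-E)u=0$ with $E\in[\lambda_{2k},\lambda_{2k+1}]$ has a number of zeros in $(a,x_*)$ bounded uniformly in $E$. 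This gives the required uniform bound, hence only finitely many eigenvalues of $H_1$ in $(\lambda_{2k},\lambda_{2k+1})$.

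The hard part will be precisely this uniformity as the spectral parameter reaches a band edge: there the Floquet exponent degenerates to $0$, the periodic solutions collide, the perturbed solutions spread out, and the elementary estimates blow up, so that the first moment assumption~\eqref{xx1b}, via Lemma~\ref{lachs}, is exactly what is needed to keep the solution asymptotics — and with them the zero count of the modified Wronskian — uniform up to the edges (mirroring the role of the Bargmann first moment at the bottom of the spectrum). A subsidiary technical point is that choosing $E_1$ close to an eigenvalue of $H_1$ makes the coefficient $\beta$ small; this is handled by treating accumulation at $\lambda_{2k+1}$ with $E_1$ kept fixed, so that $\beta$ is a fixed nonzero constant, and accumulation at $\lambda_{2k}$ symmetrically, or, if one insists on letting both endpoints move, by splitting $(a,\infty)$ into the two regions where $\psi_{1,-}(E_1,\cdot)$ is governed by the subdominant, respectively the dominant, Floquet solution at $E_1$, on each of which the relevant periodic comparison Wronskian again has at most one zero.
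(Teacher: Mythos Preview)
Your strategy---counting zeros of a modified Wronskian via \cite{GesztesySimonTeschl96} and comparing with the periodic problem---is exactly the paper's, but you have made the execution considerably harder than necessary. The paper fixes $\mu<\lambda$ once and for all (with $(\mu,\lambda)$ the gap; for the semi-infinite gap take $\lambda=\lambda_1$ and any $\mu<\inf\sigma(H_1)$, which exists by the semiboundedness in Theorem~\ref{thm1a}), and uses as comparison solutions not $\psi_{j,-}$, $\psi_{j,+}$ but the special bounded real solutions $u_j(\cdot,\mu)$, $u_j(\cdot,\lambda)$ supplied by Lemma~\ref{pingpong}(i),(iii) and Lemma~\ref{lachs}(i),(iii); note that Lemma~\ref{lachs}(iii) furnishes such solutions \emph{at} the band edges precisely under \eqref{xx1b}. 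With this choice the rescaled unperturbed Wronskian
\[
\widetilde W_0(x)=\exp\!\Bigl(\bigl(c(\lambda)+c(\mu)\bigr)\tfrac{x-a}{\omega}\Bigr)\,W_0(x)
\]
is $\omega$-periodic, so either it vanishes somewhere on every period or nowhere at all; since $H_0$ has at most one eigenvalue in $(\mu,\lambda)$, \cite{GesztesySimonTeschl96} gives finitely many zeros of $W_0$, hence none, and $\gamma:=\inf_{(a,\infty)}|\widetilde W_0|>0$. Writing $W_0-W_1$ telescopically and applying \eqref{pixel} together with the bounds in Lemma~\ref{pingpong} and Lemma~\ref{lachs} yields
\[
\exp\!\Bigl(\bigl(c(\lambda)+c(\mu)\bigr)\tfrac{x-a}{\omega}\Bigr)\bigl(W_0(x)-W_1(x)\bigr)\to 0,
\]
so a sequence of zeros $x_n\to\infty$ of $W_1$ would force $|\widetilde W_0(x_n)|\to 0$, contradicting $\gamma>0$; by \cite{GesztesySimonTeschl96} again, $H_1$ has finitely many eigenvalues in $(\mu,\lambda)$.

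Thus no limit $E_1\downarrow\lambda_{2k}$ or $E_2\uparrow\lambda_{2k+1}$, no uniformity in the spectral parameter, no coefficient $\beta$, and no separate Bargmann-type bound are needed: what you single out as ``the hard part'' simply does not arise. Your route could in principle be pushed through, but the uniform-in-energy version of Lemma~\ref{lachs} you rely on is not available in the paper and would be genuine extra work; the paper's choice of solutions and the periodicity trick for $\widetilde W_0$ buy exactly the shortcut that avoids it.
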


In the next result we assume a stronger integrability condition and conclude that the edges of the spectral bands are no embedded eigenvalues of $A_1$;
note that \eqref{xx1c} implies \eqref{xx1b} and \eqref{xx1a}. As pointed out before, this question is important for scattering theory and was
first established by Firsova \cite{F1,F2}  in the case $r_0=r_1=p_0=p_1=1$.

\begin{theorem}\label{thm3a}
Assume that
\begin{align}\label{xx1c}
\int_{a}^\infty \left(\lvert r_1(t)-r_0(t)\rvert + \left\lvert\frac{1}{p_1(t)}-\frac{1}{p_0(t)}\right\rvert + \lvert q_1(t)-q_0(t)\rvert \right)
\vert t\vert^2\,\mathrm dt <\infty
\end{align}
holds, and let $H_1$ be an arbitrary self-adjoint realization of $\tau_1$ in $L^2((a,\infty);  r_1)$.
Then the edges of the spectral bands are no eigenvalues of $H_1$ and the spectral bands consist of purely absolutely continuous spectrum of $H_1$.
\end{theorem}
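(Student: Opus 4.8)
We outline the proof. The plan is to reduce the statement to the absence of $L^2$-solutions of $(\tau_1-\lambda_0)u=0$ at the band edges and to extract this from a careful asymptotic analysis of these solutions. We may assume that the regular endpoint $a$ is chosen as large as convenient, since shifting $a$ affects neither $\sigma_{\mathrm{ess}}(H_j)$ nor the question whether $(\tau_1-\lambda)u=0$ admits a nontrivial solution in $L^2$ near $+\infty$; as $\tau_1$ is in the limit point case at $\infty$, the latter is precisely the question whether $\lambda$ is an eigenvalue of a self-adjoint realization $H_1$. First we would show that no endpoint $\lambda_0=\lambda_j$ appearing in \eqref{harry} is an eigenvalue of $H_1$. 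If $\lambda_0$ is an interior point of $\sigma_{\mathrm{ess}}(H_0)$ — which is exactly the case when two bands adjoin at $\lambda_0$ — this is already contained in Theorem~\ref{thm1a}, so the work lies with the genuine band edges, the boundary points of $\sigma_{\mathrm{ess}}(H_0)$. Granting this, the purely absolutely continuous claim follows by combining three facts: by Theorem~\ref{thm1a} the interior of each band is purely absolutely continuous spectrum; by Theorem~\ref{thm2a} the eigenvalues of $H_1$ in a gap are finite in number and hence do not accumulate at a band edge, so every band edge $\lambda_0$ has a gap-side punctured neighborhood free of spectrum; and the singular continuous part of the spectral measure has no atoms, so its restriction to a small interval about $\lambda_0$, being supported in $\{\lambda_0\}$ by the previous two facts, vanishes, while the point part at $\lambda_0$ vanishes by the first step. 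Since $\sigma_{\mathrm{ac}}(H_1)$ is closed and contains the open bands, the closed bands then consist of purely absolutely continuous spectrum.

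It thus remains to show that $(\tau_1-\lambda_0)u=0$ has only the trivial solution in $L^2((a,\infty);r_1)$ when $\lambda_0$ is a genuine band edge. By Lemma~\ref{pingpong} the monodromy matrix of the periodic problem at $\lambda_0$ has a nontrivial Jordan block, so $(\tau_0-\lambda_0)u=0$ possesses a nontrivial (semi)periodic — in particular bounded — solution $\psi_{0,+}$ and a linearly independent solution of the form $\psi_{0,-}(x)=\tfrac{c}{\omega}\,x\,\psi_{0,+}(x)+g(x)$ with $c\neq0$ and $g$ (semi)periodic, which therefore grows linearly. The core of the argument is an asymptotic analysis of the solutions of $(\tau_1-\lambda_0)u=0$, which we carry out in Lemma~\ref{lachs}: in system form the equation reads $y'=(M_0+R)y$ with $y=(u,p_1u')^{\top}$, the entries of $R$ are controlled by $\rho(t):=|1/p_1(t)-1/p_0(t)|+|q_1(t)-q_0(t)|+|r_1(t)-r_0(t)|$, and \eqref{xx1c} amounts to $\int_a^\infty(1+t^2)\rho(t)\,\mathrm dt<\infty$. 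Using the Floquet fundamental matrix $\Phi_0$ of $M_0$ — one column bounded, one of size $O(t)$ — and the substitution $y=\Phi_0 z$, the transformed system $z'=(\Phi_0^{-1}R\Phi_0)z$ has all coefficients integrable against $(1+t)\,\mathrm dt$, except for one off-diagonal entry, which only satisfies a bound $O(t^2\rho(t))$ — it acquires one power of $t$ from $\Phi_0$ and one from $\Phi_0^{-1}$. A successive-approximation argument then produces solutions $\psi_{1,\pm}$ of $(\tau_1-\lambda_0)u=0$ with $\psi_{1,+}(x)=\psi_{0,+}(x)+o(1)$ and $\psi_{1,-}(x)=(1+o(1))\,\psi_{0,-}(x)+o(1)$ as $x\to\infty$.

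The hard part will be the construction of the second solution $\psi_{1,-}$: to prevent the bounded mode from being driven by the linearly growing one along the iteration, exactly the exceptional coefficient above has to be integrable at $\infty$, that is, $\int_a^\infty t^2\rho(t)\,\mathrm dt<\infty$ — the content of the second moment condition \eqref{xx1c}. (Under only the first moment \eqref{xx1b} this coefficient need not be integrable, and one instead imposes the additional assumptions on $r_1$ and $q_1$ of Proposition~\ref{propi}.) With $\psi_{1,\pm}$ at hand the proof concludes quickly. Every solution of $(\tau_1-\lambda_0)u=0$ equals $\alpha\psi_{1,+}+\beta\psi_{1,-}$ for some $\alpha,\beta\in\mathbb C$. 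If $\beta\neq0$, then $\alpha\psi_{1,+}(x)+\beta\psi_{1,-}(x)=\tfrac{\beta c}{\omega}\,x\,\psi_{0,+}(x)+O(1)+o(x)$, which is of size $\asymp|x|$ on a set of fixed positive density, because $\psi_{0,+}$ is (semi)periodic and nontrivial, hence bounded away from zero on a fixed fraction of each period; using $|r_1-r_0|\in L^1(a,\infty)$ to compare $r_1$ with $r_0$, such a function is not in $L^2((a,\infty);r_1)$. If $\beta=0$ and $\alpha\neq0$, then $\alpha\psi_{1,+}(x)=\alpha\psi_{0,+}(x)+o(1)$ is bounded and, by the same density argument, bounded away from zero on a fixed fraction of each period, so it likewise fails to be in $L^2((a,\infty);r_1)$. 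Hence $(\tau_1-\lambda_0)u=0$ has no nontrivial solution in $L^2((a,\infty);r_1)$, so $\lambda_0$ is not an eigenvalue of any self-adjoint realization $H_1$; together with the reductions above this proves Theorem~\ref{thm3a}.
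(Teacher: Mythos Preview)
Your proof is correct and follows essentially the paper's approach: invoke Lemma~\ref{lachs}(iii) under \eqref{xx1c} to obtain perturbed solutions $u_1,v_1$ asymptotic to the Floquet pair $u_0,v_0$, then show that no nontrivial combination lies in $L^2((a,\infty);r_1)$. One small correction: in the case $\beta\neq 0$ the solution grows linearly, so comparing $\int |w_1|^2 r_1$ with $\int |w_1|^2 r_0$ actually requires the second-moment bound $\int t^2|r_1-r_0|\,\mathrm dt<\infty$ from \eqref{xx1c}, not merely $|r_1-r_0|\in L^1$ as you write; the paper carries out the same comparison via the uniform lower bound \eqref{blin} together with the pointwise estimate \eqref{MiGente2}, and your appeal to Theorem~\ref{thm2a} in the reduction is harmless but unnecessary, since a continuous measure assigns zero mass to the single edge point regardless of what happens on the gap side.
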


We find it worthwhile do provide another set of assumptions that also imply absence of eigenvalues at the edges of the spectral bands. Here we only
assume the integrability condition \eqref{xx1b}, but for $r_1$ and $q_1$ additional assumptions are required. It is left to the reader to formulate
a variant of Proposition~\ref{propi} for  the self-adjoint realization $A_1$ of $\tau_1$ in $L^2(\mathbb R;  r_1)$.

\begin{proposition}\label{propi}
Assume that \eqref{xx1b} holds and that there exist positive constants $C_0$, $C_1$ such that $r_1$ and $q_1$ satisfy $C_0 \le r_1(t) \le C_1$ and
$\int_{t-1}^{t+1} |q_1(s)|^2 ds \le C_1$ for $t$ in some neighbourhood of \(\infty\).
Let $H_1$ be an arbitrary self-adjoint realization of $\tau_1$ in $L^2((a,\infty);  r_1)$.
Then the edges of the spectral bands are no eigenvalues of $H_1$ and the spectral bands consist of purely absolutely continuous spectrum of $H_1$.
\end{proposition}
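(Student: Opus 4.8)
The plan is to reduce the statement to the single assertion that \emph{no edge $\lambda_j$ of the spectral bands is an eigenvalue of $H_1$}. Granting this, the purely absolutely continuous nature of the bands follows: by Theorem~\ref{thm1a} the open bands $(\lambda_{2k-1},\lambda_{2k})$ already carry purely absolutely continuous spectrum, while in each spectral gap the essential spectrum is empty so the spectrum there is purely discrete; consequently the singular continuous part of the spectral measure of $H_1$ is concentrated on the countable set of band edges and, being non-atomic, vanishes identically, and—by the assertion to be proved—no atom sits at a band edge either. Hence the spectral measure restricted to each closed band $[\lambda_{2k-1},\lambda_{2k}]$ is purely absolutely continuous.

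To prove the assertion, fix a band edge $\lambda_j$ and suppose, for a contradiction, that $u\in L^2((a,\infty);r_1)$, $u\not\equiv0$, solves $(\tau_1-\lambda_j)u=0$. By Lemma~\ref{pingpong} the periodic equation $(\tau_0-\lambda_j)v=0$ has a bounded nontrivial solution $c_j$ (periodic or semiperiodic, so that $\int_x^{x+2\omega}|c_j|^2$ stays bounded away from $0$) and a linearly independent solution $s_j$ with $|s_j(x)|\le C(1+|x|)$ and $\int_x^{x+2\omega}|s_j|^2\to\infty$; their modified Wronskian $W_j$ is a nonzero constant. (If $\lambda_j$ is a closed‑gap edge, both periodic solutions are bounded and the argument below goes through already under \eqref{xx1a}.) The first, and crucial, step is an a priori bound on $u$: since $r_1\ge C_0>0$ we have $u\in L^2((a,\infty))$, and writing the equation as $-(p_1u')'=(\lambda_j r_1-q_1)u$ with $1/p_1,q_1,r_1$ uniformly locally integrable near $\infty$ (here we use $r_1\le C_1$, the local $L^2$‑bound on $q_1$, and $1/p_1-1/p_0\in L^1$ with $1/p_0$ periodic), a standard interior (elliptic‑type) estimate for Sturm–Liouville equations shows that $u$ and its quasi‑derivative $u^{[1]}=p_1u'$ are bounded on $[a,\infty)$ and tend to $0$ as $x\to\infty$. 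In particular $M(x):=\sup_{t\ge x}\bigl(|u(t)|+|u^{[1]}(t)|\bigr)$ is finite, nonincreasing and $M(x)\to0$.

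Passing to the associated first‑order systems and comparing the equations for $\tau_1$ and $\tau_0$ (the difference being a multiplicative perturbation controlled by $V(t):=|r_1(t)-r_0(t)|+|1/p_1(t)-1/p_0(t)|+|q_1(t)-q_0(t)|$, after an integration by parts to absorb the term stemming from $p_1\ne p_0$, exactly as in Lemma~\ref{lachs}), expand $u=\alpha c_j+\beta s_j$ by variation of parameters. Using $|s_j(t)|\le C(1+|t|)$ and the boundedness of $u,u^{[1]}$ from the previous step, $|\alpha'(t)|+|\beta'(t)|\le C'(1+|t|)V(t)\,M(a)$, which is integrable by \eqref{xx1b}; hence $\alpha(x)\to\alpha_\infty$, $\beta(x)\to\beta_\infty$, and since $|s_j(x)|\int_x^\infty V(t)\,\mathrm dt\to0$ by \eqref{xx1b} one gets $u(x)=\alpha_\infty c_j(x)+\beta_\infty s_j(x)+o(1)$. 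Comparing $\int_x^{x+2\omega}|u|^2$ with $\int_x^{x+2\omega}|\alpha_\infty c_j+\beta_\infty s_j|^2$ and using $u\in L^2$ forces $\alpha_\infty=\beta_\infty=0$ (the $c_j$‑part does not decay in the mean and the $s_j$‑part grows), so that $u(x)=\frac1{W_j}\int_x^\infty\bigl(c_j(x)s_j(t)-s_j(x)c_j(t)\bigr)\tilde g(t)\,\mathrm dt$ with $|\tilde g(t)|\le CV(t)\bigl(|u(t)|+|u^{[1]}(t)|\bigr)$, and likewise for $u^{[1]}$. For $t\ge x$ the kernel is $\le C''(1+|t|)$, whence $M(x)\le 2C''M(x)\,\rho(x)$ with $\rho(x):=\int_x^\infty(1+|t|)V(t)\,\mathrm dt\to0$ by \eqref{xx1b}. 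Choosing $x_0$ with $\rho(x_0)<1/(2C'')$ forces $M(x_0)=0$, so $u$ vanishes on $[x_0,\infty)$ and hence on $(a,\infty)$ by unique continuation—contradicting $u\not\equiv0$. Thus, irrespective of the self‑adjoint boundary condition at $a$, $\lambda_j$ is not an eigenvalue of $H_1$.

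The main obstacle is the a priori boundedness step. Under the bare assumption $1/p_1,q_1,r_1\in L^1_{\mathrm{loc}}$ an $L^2$‑eigenfunction at a band edge need not be bounded together with its quasi‑derivative; if $u$ and $u^{[1]}$ were only of polynomial growth comparable to $s_j$, the kernel estimate in the last step would produce an integrand of size $C(1+|t|)^2V(t)$, which requires the second moment condition \eqref{xx1c} as in Theorem~\ref{thm3a}. The extra hypotheses $C_0\le r_1\le C_1$ and $\int_{t-1}^{t+1}|q_1(s)|^2\,\mathrm ds\le C_1$ near $\infty$ are precisely what upgrades an $L^2$‑solution to a bounded one; once this is secured, the first moment condition \eqref{xx1b} alone suffices and the self‑improving estimate for $M(x)$ closes the argument. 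A secondary, purely technical point is the careful bookkeeping of the quasi‑derivative terms and of the contribution of $p_1\ne p_0$ in the variation‑of‑parameters formula, which is handled as in Lemma~\ref{lachs}.
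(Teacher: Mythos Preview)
Your argument is essentially correct, but it takes a genuinely different route from the paper's. The paper's proof is a two-line Wronskian contradiction: it takes the bounded perturbed solution $u_1$ at the band edge already constructed in Lemma~\ref{lachs}~(iii) (this is where \eqref{xx1b} enters), normalizes a putative eigenfunction $v_1$ so that $W(u_1,v_1)=1$, and then observes that
\[
\tfrac12 \le r_1 u_1^2\,\frac{(p_1v_1')^2}{r_1} + r_1 v_1^2\,\frac{(p_1u_1')^2}{r_1}.
\]
Here $r_1u_1^2$ and $(p_1u_1')^2/r_1$ are bounded by Lemma~\ref{lachs}~(iii) and the assumption $C_0\le r_1\le C_1$, while $r_1v_1^2\to0$ along a subsequence (trivially, since $v_1\in L^2(r_1)$) and $(p_1v_1')^2/r_1\to0$ by \cite[Lemma~2.7]{sst}; this yields the contradiction. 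Your approach bypasses Lemma~\ref{lachs}~(iii) entirely and works directly with the eigenfunction: you use the same interior estimate (the \cite{sst} input) to get boundedness and decay of $u$ and $p_1u'$, then set up a variation-of-parameters integral equation against the \emph{unperturbed} Floquet basis and close via a self-improving Volterra bound. What your route buys is independence from the machinery of Lemma~\ref{lachs}; what the paper's route buys is brevity, since that machinery is already in place. One small point of care in your write-up: the asymptotic $u(x)=\alpha_\infty c_j(x)+\beta_\infty s_j(x)+o(1)$ requires the \emph{separate} bounds $|\beta'(t)|\le C\,V(t)$ (coming from the bounded factor $c_j$) and $|\alpha'(t)|\le C(1+|t|)V(t)$ (coming from $s_j$); your lumped bound $|\alpha'|+|\beta'|\le C'(1+|t|)V(t)$ would only give $|s_j(x)|\,|\beta(x)-\beta_\infty|\le C(1+|x|)\int_x^\infty(1+|t|)V(t)\,\mathrm dt$, which need not vanish under \eqref{xx1b} alone. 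With the refined bound on $\beta'$ the step goes through as you indicate.
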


The proofs of Theorem \ref{thm1a}, Theorem~\ref{thm2a}, Theorem~\ref{thm3a}, and Proposition~\ref{propi} are at the end of this section.
In what follows, we investigate solutions of the periodic and the perturbed periodic problem. The first lemma is more or less a variant of standard working knowledge in periodic differential operators and
is essentially contained in \cite[Chapter 1]{BrownEasthamSchmidt13} or \cite{Weidmann87}. For the convenience of the reader we provide a short proof.

\begin{lemma}
	\label{pingpong}
	For $\lambda\in\mathbb R$ there exist linearly independent solutions $u_{0}=u_0(\cdot,\lambda)$ and $v_{0}=v_0(\cdot,\lambda)$ of $(\tau_0-\lambda)u=0$
	and $c=c(\lambda)\in\mathbb C$ such that the functions $U_0=U_0(\cdot,\lambda)$ and $V_0=V_0(\cdot,\lambda)$ given by
		      \begin{equation}
				  \label{quasiperSol1}
				  \begin{split}
				  U_0(x)&=\exp\left(c\frac{x-a}{\omega}\right)\cdot\begin{pmatrix}u_{0}(x)\\ (p_0u_0')(x)\end{pmatrix},\\[0.5\baselineskip]
				  V_0(x)&=\exp\left(-c\frac{x-a}{\omega}\right)\cdot\begin{pmatrix}v_{0}(x)\\ (p_0v_0')(x)\end{pmatrix}
				  \end{split}
			  \end{equation}
	 on $(a,\infty)$ have the following property:
	\begin{itemize}
		\item[(i)] If $\lambda\in \mathbb R\setminus\sigma_{\mathrm{ess}}(H_0)$, then $U_0$ and $V_0$
			  are both $\omega$-periodic and bounded on $(a,\infty)$, where $\operatorname{Re}c>0$.
		\item[(ii)] If $\lambda$ is an interior point of $\sigma_{\mathrm{ess}}(H_0)$, then $U_0$ and $V_0$ are both $\omega$-periodic and bounded on $(a,\infty)$, where $\operatorname{Re}c=0$. In particular, $\lvert u_0\rvert$ and $\lvert v_0\rvert$ are $\omega$-periodic and bounded on $(a,\infty)$.
		\item[(iii)] If $\lambda$ is a boundary point of $\sigma_{\mathrm{ess}}(H_0)$, then $U_0$ is $\omega$-periodic and bounded on $(a,\infty)$, where $\operatorname{Re} c=0$ and, in particular, $\lvert u_0\rvert$ is $\omega$-periodic and bounded on $(a,\infty)$. Furthermore, $V_0$ satisfies
		\begin{equation}
			\label{mint}
			\lVert V_0(x)\rVert_{\mathbb C^2} \leq C \left(1+\frac{x-a}{\omega}\right)
		\end{equation}
		on $(a,\infty)$ for some positive constant $C$.
	\end{itemize}
	In the cases (i) and (iii) the solutions $u_0$ and $v_0$ can be chosen to be real-valued. Moreover, if $\lambda\in\sigma_{\mathrm{ess}}(H_0)$, then for every non-trivial solution of $(\tau_0-\lambda)u=0$ there exists a positive constant $E$ such that
	\begin{equation}
			\label{blin}
			\int_{a+n\omega}^{a+(n+1)\omega} \lvert u(t)\rvert^2 r_0(t)\,\mathrm d t \geq E\quad\text{for all } n\in\mathbb N.
	\end{equation}
\end{lemma}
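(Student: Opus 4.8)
The plan is to establish the properties of the quasi-periodic solutions $U_0, V_0$ by invoking the monodromy (transfer) matrix formalism and its relation to the Hill discriminant, and then to deduce \eqref{blin} from the $\omega$-periodicity and boundedness (or mild polynomial growth) of the transformed solutions.

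First I would introduce the monodromy matrix $M(\lambda)\in\mathrm{SL}_2(\mathbb C)$ defined by the fundamental system of $(\tau_0-\lambda)u=0$ on one period, say the matrix mapping $\bigl(u(a),(p_0u')(a)\bigr)^\top$ to $\bigl(u(a+\omega),(p_0u')(a+\omega)\bigr)^\top$; since the coefficients are real and $\omega$-periodic, $M(\lambda)$ is real for real $\lambda$ and has determinant $1$. Its eigenvalues $\rho_{\pm}$ satisfy $\rho_+\rho_- = 1$ and $\rho_+ + \rho_- = D(\lambda)$, where $D(\lambda)=\operatorname{tr}M(\lambda)$ is the Hill discriminant. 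The three cases of the lemma correspond exactly to $|D(\lambda)|>2$ (spectral gap: $\rho_\pm$ real, $\rho_+ = e^{c}$ with $\operatorname{Re}c>0$, $\rho_- = e^{-c}$), $|D(\lambda)|<2$ (interior of a band: $\rho_\pm = e^{\pm c}$ with $c\in i\mathbb R\setminus\{0\}$, so two distinct eigenvalues on the unit circle), and $|D(\lambda)|=2$ (band edge: $\rho_+=\rho_-=\pm1$, a double eigenvalue). Set $c=c(\lambda)$ so that $e^{c}=\rho_+$; this is the $c$ in the statement.

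For the Floquet solutions: in cases (i) and (ii) the monodromy matrix is diagonalizable with distinct eigenvalues $e^{\pm c}$, so one picks eigenvectors and lets $u_0, v_0$ be the corresponding solutions, i.e. $\bigl(u_0(x+\omega),(p_0u_0')(x+\omega)\bigr)^\top = e^{c}\bigl(u_0(x),(p_0u_0')(x)\bigr)^\top$, and similarly with $e^{-c}$ for $v_0$. Then by construction $U_0(x+\omega)=U_0(x)$ and $V_0(x+\omega)=V_0(x)$, so $U_0,V_0$ are $\omega$-periodic, hence bounded (they are continuous and periodic). In case (i), $c$ is real and the eigenvectors of the real matrix $M(\lambda)$ can be chosen real, so $u_0,v_0$ are real-valued; in case (ii) $|e^{\pm c}|=1$ gives that $|u_0|,|v_0|$ themselves (not just the vectors $U_0,V_0$) are $\omega$-periodic and bounded. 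In case (iii) $M(\lambda)$ has the double eigenvalue $e^c=\pm1$ ($\operatorname{Re}c=0$); it may or may not be diagonalizable. If it is (the generic band edge for a nontrivial gap, but still possible to be a Jordan block), there is at least one genuine eigenvector giving a periodic (actually with multiplier $\pm1$, so $|u_0|$ periodic) solution $u_0$, which can be chosen real since the eigenvalue is real; the second solution $v_0$ corresponds to a generalized eigenvector, and iterating $M(\lambda)^n$ on it produces a term growing linearly in $n$, which after multiplying by $e^{-c(x-a)/\omega}$ (modulus $1$) yields the bound \eqref{mint} with linear growth in $(x-a)/\omega$. If $M(\lambda)$ happens to be $\pm I$, both solutions are periodic and the bound \eqref{mint} holds trivially.

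Finally, for \eqref{blin}: let $u$ be any nontrivial solution of $(\tau_0-\lambda)u=0$ with $\lambda\in\sigma_{\mathrm{ess}}(H_0)$. Write $u$ as a linear combination of the Floquet basis. In case (ii), $u=\alpha u_0+\beta v_0$ with $|u_0|,|v_0|$ periodic; because $u_0,v_0$ are linearly independent with Floquet multipliers $e^{\pm c}$, $c\in i\mathbb R\setminus\{0\}$, the vector $\bigl(u(a+n\omega),(p_0u')(a+n\omega)\bigr)^\top = M(\lambda)^n\bigl(u(a),(p_0u')(a)\bigr)^\top$ never vanishes and in fact $\|\bigl(u(a+n\omega),(p_0u')(a+n\omega)\bigr)\|_{\mathbb C^2}$ is bounded below by a positive constant uniformly in $n$ (its entries are almost-periodic-type sequences $\alpha e^{nc}(\cdots)+\beta e^{-nc}(\cdots)$ that stay away from $0$; more directly, $M(\lambda)^n$ is a bounded sequence of matrices in $\mathrm{SL}_2$, so $\|M(\lambda)^{-n}\|$ is bounded and hence $\|M(\lambda)^n v\|\ge \|v\|/\sup_n\|M(\lambda)^{-n}\|>0$). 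Since on each period $u$ solves a fixed ODE and the map from the initial data at $a+n\omega$ to the $L^2(a+n\omega,a+(n+1)\omega;r_0)$-norm of the solution is a norm on $\mathbb C^2$ (it is positive definite because a solution vanishing on an interval is identically zero), and this norm depends only on $\lambda$ and the coefficients restricted to one period (by periodicity), there is a constant $E>0$ with $\int_{a+n\omega}^{a+(n+1)\omega}|u|^2 r_0\,\mathrm dt \ge E\|\bigl(u(a+n\omega),(p_0u')(a+n\omega)\bigr)\|^2 \ge E'$ for all $n$. In case (iii) the same argument works: $M(\lambda)^n$ grows at most linearly, so $\|M(\lambda)^{-n}\|$ is still bounded (it is the conjugate/transpose-adjugate, also at most linear), giving the uniform lower bound on the initial-data norm, hence on the period-$L^2$-norm.

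The main obstacle is the uniform lower bound in \eqref{blin}: one must rule out that $u$ becomes ``small over a full period'' for large $n$, which is exactly the statement that $\|M(\lambda)^{-n}\|$ stays bounded — true here because $M(\lambda)$ is power-bounded or grows only polynomially with bounded inverse powers (eigenvalues on the unit circle, at worst a single Jordan block), but it would fail if $\lambda$ were in a gap. This is where the hypothesis $\lambda\in\sigma_{\mathrm{ess}}(H_0)$ is essential, and it should be stated explicitly that the constant $E$ depends on $u$ (through $\alpha,\beta$) and on $\lambda$, but not on $n$, consistent with the phrasing ``for every non-trivial solution \ldots there exists a positive constant $E$''.
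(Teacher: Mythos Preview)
Your treatment of the monodromy matrix, the Hill discriminant, and the construction of the Floquet solutions in cases (i)--(iii) is essentially the paper's approach, modulo a sign slip: with $U_0(x)=e^{c(x-a)/\omega}(u_0,p_0u_0')^\top$ you need $u_0$ to carry the multiplier $e^{-c}$ (not $e^{c}$) for $U_0$ to be $\omega$-periodic; this is trivial to repair.

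The substantive gap is in your argument for \eqref{blin} at a band edge. Your reduction to a lower bound on $\|M(\lambda)^n v\|$ via $\|M(\lambda)^n v\|\geq \|v\|/\|M(\lambda)^{-n}\|$ works in case~(ii), where $M(\lambda)$ is similar to a diagonal unitary and $\sup_n\|M(\lambda)^{-n}\|<\infty$. But in case~(iii) with a nontrivial Jordan block, $M(\lambda)$ is similar to $\pm\bigl(\begin{smallmatrix}1&1\\0&1\end{smallmatrix}\bigr)$, hence $M(\lambda)^{-n}$ is similar to $\pm\bigl(\begin{smallmatrix}1&-n\\0&1\end{smallmatrix}\bigr)$ and $\|M(\lambda)^{-n}\|$ grows linearly in $n$; it is \emph{not} bounded, contrary to your parenthetical claim. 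The inequality then only yields a lower bound of order $1/n$, which does not give \eqref{blin}. The desired conclusion is still correct: in the Jordan basis, $v=(\alpha,\beta)^\top$ gives $M^n v=(\pm1)^n(\alpha+n\beta,\beta)^\top$, so $\|M^n v\|\geq|\beta|$ if $\beta\neq0$ and $\|M^n v\|=|\alpha|$ otherwise. The paper sidesteps this by working directly in $L^2$ over one period: it replaces $v_0$ by $w_0=d_1u_0+v_0$ orthogonal to $u_0$ in $L^2((a,a+\omega);r_0)$, notes that $\mathcal M^n w_0=\gamma_n u_0+e^{cn}w_0$, and applies Pythagoras to obtain $\int_{a+n\omega}^{a+(n+1)\omega}|\alpha u_0+\beta w_0|^2 r_0\,\mathrm dt\geq |\beta|^2\int_a^{a+\omega}|w_0|^2 r_0\,\mathrm dt>0$. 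Your norm-equivalence idea (the initial-data norm at $a+n\omega$ is uniformly comparable to the $L^2((a+n\omega,a+(n+1)\omega);r_0)$-norm, by periodicity of the coefficients) is a clean alternative once the lower bound on $\|M^n v\|$ is established correctly.
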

\begin{proof}
	Let $\mathcal L$ be the two-dimensional complex space of solutions of $(\tau_0-\lambda)u=0$. As the coefficients of $\tau_0$ are $\omega$-periodic,
	for every $f\in\mathcal L$ the function $f(\cdot + \omega)$ is again in $\mathcal L$. Now we identify the linear
	map $\mathcal M: \mathcal L\rightarrow \mathcal L$, $f\mapsto f(\cdot+\omega)$ with the matrix
	\begin{equation*}
		M=\begin{pmatrix}
			\hat u(a+\omega) & \hat v(a+\omega)\\
			(p_0\hat u')(a+\omega) & (p_0\hat v')(a+\omega)
		\end{pmatrix},
	\end{equation*}
	where $\hat u,\hat v\in\mathcal L$ are chosen such that $\hat u(a)=1$, $(p_0\hat u')(a)=0$ and $\hat v(a)=0$, $(p_0\hat v')(a)=1$.
	Since $\det M$ coincides with
the Wronskian the spectrum is
	\begin{equation*}
		\sigma(\mathcal M)=\sigma(M)=\{\mathrm{e}^{c},\mathrm{e}^{-c}\}, \quad\text{where }c\in\mathbb C.
	\end{equation*}
	From now on fix the \emph{Floquet exponent} $c$ such that $\operatorname{Re} c\geq 0$. The eigenvalues $e^{\pm c}$ solve the quadratic equation $\det (M-z) = z^2 - Dz +1=0$, where the Hill discriminant $D:=D(\lambda)= \hat u(a+\omega) + (p_0 \hat v')(a+\omega)$ is real. Therefore,
	\begin{equation}
		\label{pqformel}
		\mathrm{e}^{\pm c} = \frac{D}{2}\pm \sqrt{\frac{D^2}{4}-1}\quad \text{or}\quad \mathrm{e}^{\pm c} = \frac{D}{2}\mp \sqrt{\frac{D^2}{4}-1}.
	\end{equation}
Recall that by \cite[Chapter 12 and Appendix]{Weidmann87} and \cite[Chapter 16]{Weidmann03}
	\begin{equation}
		\label{essspecperiodWeidmann}
		\sigma_{\mathrm{ess}}(H_0) = \{\lambda\in\mathbb R : \lvert D(\lambda)\rvert \leq 2\}\quad\text{and}\quad \partial\sigma_{\mathrm{ess}}(H_0) = \{\lambda\in\mathbb R : \lvert D(\lambda)\rvert = 2\}.
	\end{equation}
  \noindent
  (i) For $\lambda\in \mathbb R\setminus\sigma_{\mathrm{ess}}(H_0)$ we have $\lvert D\rvert >2$ and hence
  $\mathrm{e}^{ c}\neq \mathrm{e}^{- c}$ are both real by \eqref{pqformel}, which leads to $\operatorname{Re} c>0$.
As $\mathcal M$ has two distinct eigenvalues, we find corresponding eigenvectors $u_0, v_0\in\mathcal L$ satisfying
	\begin{align}
		\label{ernie}
		u_0(x+\omega) &= (\mathcal M u_0)(x) = \mathrm{e}^{- c} u_0(x), &(p_0 u_0')(x+\omega) = \mathrm{e}^{- c} (p_0u_0')(x),\\
		\label{bert}
		v_0(x+\omega) &= (\mathcal M v_0)(x) = \mathrm{e}^{c} v_0(x), &(p_0 v_0')(x+\omega) = \mathrm{e}^{c} (p_0v_0')(x),
	\end{align}
	on $(a,\infty)$, where the equalities in \eqref{ernie} and \eqref{bert} for the derivatives follow from the periodicity of $p_0$. From
	\eqref{ernie} and \eqref{bert} one also sees that the functions $U_0$ and $V_0$ defined in \eqref{quasiperSol1} are both $\omega$-periodic, and hence also bounded. This completes the proof of (i).\\
\noindent
  (ii) For an interior point $\lambda$ of $\sigma_{\mathrm{ess}}(H_0)$ we have
	$\lvert D\rvert <2$ by \eqref{essspecperiodWeidmann}, and hence $\mathrm{e}^{ c}$ and $\mathrm{e}^{- c}$ are non-real and complex conjugates of each other,
	which yields $\operatorname{Re} c =0$. As in the proof of (i) $\mathcal M$ has a pair of distinct eigenvalues and we find corresponding eigenvectors $u_0, v_0\in\mathcal L$ satisfying \eqref{ernie}, \eqref{bert}, which shows the periodicity of the $U_0$ and $V_0$ given in \eqref{quasiperSol1} and finishes the proof of (ii).\\
\noindent
    (iii) For $\lambda\in\partial\sigma_{\mathrm{ess}}(H_0)$ we have $\lvert D\rvert =2$ and hence $\mathrm{e}^{ c}=\mathrm{e}^{-c}=D/2\in\{-1,1\}$ by \eqref{pqformel}, and therefore $\operatorname{Re} c =0$. Again, we find $u_0\in\mathcal L$ such that \eqref{ernie} holds and this shows the periodicity of the function $U_0$ defined in \eqref{quasiperSol1}. If the geometric multiplicity of $\mathrm{e}^{ c}=\mathrm{e}^{-c}$ is two, then there is a second linearly independent solution $v_0\in \mathcal L$ which satisfies \eqref{bert}. In this case the function $V_0$ in \eqref{quasiperSol1} is $\omega$-periodic and the estimate \eqref{mint} holds for $C=\sup_{x\in [a,a+\omega]} \lVert V_0(x)\rVert_{\mathbb C^{2}}$. Otherwise, if the geometric multiplicity of $\mathrm{e}^{ c}=\mathrm{e}^{-c}$ is one, then there is a Jordan chain of length two, that is, there exists $v_0\in\mathcal L$ with $\mathcal M v_0 = \mathrm{e}^{c} v_0 + u_0$. One has
	\begin{align}\label{Anitta}
		&v_0(x+\omega) = \mathrm{e}^{c} v_0(x) + u_0(x), & (p_0v_0')(x+\omega) = \mathrm{e}^{c} (p_0v_0')(x) + (p_0u_0')(x)
	\end{align}
	for all $x\in (a,\infty)$. Now consider
\begin{equation*}
		 V_0(x):= \exp\left(-c\frac{x-a}{\omega}\right)\cdot\begin{pmatrix}
 v_0(x)\\  (p_0v_0')(x)
		\end{pmatrix},
	\end{equation*}
as in \eqref{quasiperSol1} and recall that
$\operatorname{Re} c=0$. With \eqref{Anitta} we have
	\begin{equation}
		\label{nalgene}
		\lVert V_0(x+\omega)\rVert_{\mathbb C^2} = \left\lVert\begin{pmatrix}
			\mathrm{e}^{c} v_0(x) + u_0(x)\\ \mathrm{e}^{c} (p_0v_0')(x) + (p_0u_0')(x)
		\end{pmatrix}\right\rVert_{\mathbb C^2} \leq \lVert V_0(x)\rVert_{\mathbb C^2} + \lVert U_0(x)\rVert_{\mathbb C^2}.
	\end{equation}
	Let $x\in (a,\infty)$ and $k\in\mathbb N$ with $k\leq (x-a)/\omega<k+1$. Then \eqref{nalgene}  and the periodicity of $U_0$ give successively
	\begin{equation*}
		\begin{split}
			\lVert V_0(x)\rVert_{\mathbb C^2} &\leq \lVert V_0(x - k\omega)\rVert_{\mathbb C^2} + k \lVert U_0(x-k\omega)\rVert_{\mathbb C^2}\\
			&\leq \lVert V_0(x-k\omega)\rVert_{\mathbb C^2} + \frac{x-a}{\omega} \lVert U_0(x-k\omega)\rVert_{\mathbb C^2}\\
			&\leq \sup_{t\in [a,a+\omega]} \bigl(\lVert V_0(t)\rVert_{\mathbb C^2} + \lVert U_0(t)\rVert_{\mathbb C^2}\bigr) \cdot \left(1+\frac{x-a}{\omega}\right).
		\end{split}
	\end{equation*}
	This shows (iii).
	
	Since in the cases (i) and (iii) the spectrum of $\mathcal M$ is real, $\mathcal M$ can be regarded as a mapping in the real space of real-valued solutions of $(\tau_0-\lambda)u=0$ instead of the complex space $\mathcal L$. Hence, $u_0$ and $v_0$ can be chosen as real-valued solutions. Finally, to show \eqref{blin},
	consider $\lambda\in\sigma_{\mathrm{ess}}(H_0)$ and let $u_0,v_0$ be as in (ii) or (iii). Choose $d_1\in \mathbb C$ such
	that $w_0:= d_1 u_0 + v_0$ is orthogonal to $u_0$  in $L^2((a,a+\omega);r_0)$. We have $\mathcal M u_0 = \mathrm{e}^{-c}u_0$ and $\mathcal Mv_0=\mathrm{e}^{c}v_0 + d_0 u_0$, where $d_0\in\{0,1\}$. Thus,
	\begin{equation*}
		\mathcal M w_0 = \bigl(\mathrm{e}^{-c} d_1 + d_0\bigr) u_0 + \mathrm{e}^{c} v_0 = \bigl(\mathrm{e}^{-c} d_1 + d_0 - \mathrm{e}^{c}d_1\bigr) u_0 + \mathrm{e}^{c} w_0
	\end{equation*}
	and successively for all $n\in\mathbb N$
	\begin{equation*}
		\mathcal M^n w_0 = \gamma_n u_0 + \mathrm{e}^{cn} w_0,\quad\text{where }\gamma_n\in\mathbb C.
	\end{equation*}
	We consider a non-trivial linear combination $\alpha u_0 + \beta w_0$, where $\alpha,\beta\in \mathbb C$. Note that by \eqref{ernie}
	$u_0(t+n\omega) =(\mathcal M^n u_0)(t)= \mathrm{e}^{-nc} u_0(t)$ for $t\in [a,\infty)$ and $n\in\mathbb N$. Recall also
	that $\operatorname{Re} c =0$. If $\beta =0$, then
	\begin{equation*}
		\int_{a+n\omega}^{a+(n+1)\omega} \lvert \alpha u_0(t)\rvert^2 r_0(t)\,\mathrm dt = \int_{a}^{a+\omega} \lvert \alpha u_0(t)\rvert^2 r_0(t)\,\mathrm dt > 0
	\end{equation*}
	for all $n\in\mathbb N$.
	Otherwise, if $\beta\neq 0$, then
	\begin{equation*}
		\begin{split}
			&\int_{a+n\omega}^{a+(n+1)\omega} \lvert \alpha u_0(t) +\beta w_0(t)\rvert^2 r_0(t)\,\mathrm dt\\
			&\qquad = \int_{a}^{a+\omega} \lvert \alpha (\mathcal M^n u_0)(t) +\beta (\mathcal M^n w_0)(t)\rvert^2 r_0(t)\,\mathrm dt\\
			& \qquad = \int_{a}^{a+\omega} \lvert (\alpha \mathrm{e}^{-cn} + \beta\gamma_n )  u_0(t) +\beta \mathrm{e}^{cn}w_0(t)\rvert^2 r_0(t)\,\mathrm dt\\
			&\qquad \geq \int_{a}^{a+\omega} \lvert \beta w_0(t)\rvert^2 r_0(t)\,\mathrm dt>0
		\end{split}
	\end{equation*}
	for all $n\in\mathbb N$. In both cases we conclude \eqref{blin} and Lemma~\ref{pingpong} is shown.
\end{proof}

The solution's asymptotics are basically preserved under $L^1$-perturbations of $\tau_0$ with respect to its coefficients. This is the content of the next lemma.

\begin{lemma}
        \label{lachs}
        Let $\lambda\in\mathbb R$, assume that \eqref{xx1a} holds and let
        $u_0$, $v_0$ and $c$ be as in Lemma~\ref{pingpong}. Then there exist
        linearly independent solutions $u_{1}=u_1(\cdot,\lambda)$ and
$v_{1}=v_1(\cdot,\lambda)$ of $(\tau_1-\lambda)u=0$ such that the
following holds:
        \begin{enumerate}
                \item If $\lambda\in \mathbb R\setminus\sigma_{\mathrm{ess}}(H_0)$,
that is, $\operatorname{Re} c>0$, then
                          \begin{equation}
                                \label{pixel}
                                \exp\left(\operatorname{Re} c
\frac{x-a}{\omega}\right)\cdot\left\lVert\begin{pmatrix}u_1(x)\\
(p_1u_1')(x)\end{pmatrix} - \begin{pmatrix}u_0(x)\\
(p_0u_0')(x)\end{pmatrix}\right\rVert_{\mathbb C^{2}} \rightarrow
0\quad\text{as }x\rightarrow \infty
                      \end{equation}
                          and
                          \begin{equation}
                          \begin{split}
                                \label{phone}
                                &\left\lVert\begin{pmatrix}u_1(x)\\
(p_1u_1')(x)\end{pmatrix}\right\rVert_{\mathbb C^2}\leq
C\exp\left(-\operatorname{Re} c \frac{x-a}{\omega}\right),\\[1ex]
&\left\lVert\begin{pmatrix}v_1(x)\\
(p_1v_1')(x)\end{pmatrix}\right\rVert_{\mathbb C^2}\leq
C\exp\left(\operatorname{Re} c \frac{x-a}{\omega}\right)
                                \end{split}
                      \end{equation}
                      on $(a,\infty)$, where $C=C(\lambda)$ is a positive constant. In particular, $u_1$ is bounded
on $(a,\infty)$.

                \item If $\lambda$ is an interior point of
$\sigma_{\mathrm{ess}}(H_0)$, that is, $\operatorname{Re} c=0$, then
\eqref{pixel} and \eqref{phone} hold
                on $(a,\infty)$, and
                          \begin{equation}\label{bbb}
                                \left\lVert\begin{pmatrix}v_1(x)\\ (p_1v_1')(x)\end{pmatrix} -
\begin{pmatrix}v_0(x)\\ (p_0v_0')(x)\end{pmatrix}\right\rVert_{\mathbb
C^{2}} \rightarrow 0\quad\text{as }x\rightarrow \infty.
                          \end{equation}
                          In particular, $u_1$ and $v_1$ are bounded on $(a,\infty)$.
                \item If $\lambda$ is a boundary point of $\sigma_{\mathrm{ess}}(H_0)$,
that is, $\operatorname{Re} c =0$, and \eqref{xx1b} (and hence also \eqref{xx1a})  holds, then $u_1$ satisfies \eqref{pixel}
                        and the first inequality in \eqref{phone} on $(a,\infty)$. In particular, $u_1$ is bounded
on $(a,\infty)$. If \eqref{xx1c} (and hence also \eqref{xx1a} and \eqref{xx1b}) holds,
			then $v_1$ satisfies \eqref{bbb}.
        \end{enumerate}
        The solutions in (i) and (iii) can be chosen to be real-valued.
\end{lemma}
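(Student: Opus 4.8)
The plan is to recast $(\tau_1-\lambda)u=0$ as a perturbation of the periodic first-order system belonging to $(\tau_0-\lambda)u=0$ and to solve it by a fixed-point iteration whose convergence is controlled by the ($L^1$- or moment-weighted) size of the coefficient differences. Passing to the quasi-derivative $p_1u'$, a solution $u$ of $(\tau_1-\lambda)u=0$ yields $y=(u,p_1u')^{\top}$ with
\[
y'=(\mathcal A_0+R)\,y,\qquad \mathcal A_0=\begin{pmatrix}0&1/p_0\\ q_0-\lambda r_0&0\end{pmatrix},\qquad R=\begin{pmatrix}0&\frac1{p_1}-\frac1{p_0}\\ (q_1-q_0)-\lambda(r_1-r_0)&0\end{pmatrix}.
\]
By \eqref{xx1a} we have $R\in L^1((a,\infty);\mathbb C^{2\times2})$, and under \eqref{xx1b} (resp.\ \eqref{xx1c}) also $(1+(x-a)/\omega)^{k}R(x)$ lies in $L^1$ for $k=1$ (resp.\ $k=2$). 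Let $Z=(\widehat U_0\mid\widehat V_0)$ be the fundamental matrix of $z'=\mathcal A_0z$ with columns $\widehat U_0=(u_0,p_0u_0')^{\top}=\mathrm e^{-c(x-a)/\omega}U_0$ and $\widehat V_0=(v_0,p_0v_0')^{\top}=\mathrm e^{c(x-a)/\omega}V_0$ from Lemma~\ref{pingpong}; since $\operatorname{tr}\mathcal A_0=0$, $W:=\det Z$ is the nonzero constant Wronskian of $u_0,v_0$, the norms $\|U_0\|,\|V_0\|$ are $\omega$-periodic, hence bounded, in cases (i), (ii), and $\|V_0(x)\|\le C(1+(x-a)/\omega)$ in case (iii) by \eqref{mint}. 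The crucial structural fact is that the Green's function $Z(x)Z(s)^{-1}$ splits into two rank-one terms, one carrying the factor $\mathrm e^{-c(x-s)/\omega}$ and one carrying $\mathrm e^{c(x-s)/\omega}$, in each of which precisely one of the (possibly linearly growing) factors $U_0,V_0$ is evaluated at $x$ and the other at $s$; when $\operatorname{Re}c=0$ the exponential is a pure phase.

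For the ``small'' solution $u_1$ I would solve the Volterra-type equation $y(x)=\widehat U_0(x)-\int_x^{\infty}Z(x)Z(s)^{-1}R(s)\,y(s)\,\mathrm ds$ by successive approximation in the Banach space of continuous $\mathbb C^2$-valued functions $y$ on $[b,\infty)$ with $\sup_{x\ge b}\mathrm e^{\operatorname{Re}c(x-a)/\omega}\|y(x)\|<\infty$ (i.e.\ bounded continuous functions when $\operatorname{Re}c=0$), $b$ being a large parameter. For such $y$ both rank-one parts of $Z(x)Z(s)^{-1}R(s)y(s)$ are integrable over $(x,\infty)$: the exponentials do not grow on $\{s\ge x\}$, in case (iii) one factor $(1+(s-a)/\omega)$ absorbs the linear growth of $V_0$ in $Z(s)^{-1}$, and the prescribed decay of $y$ covers the rest. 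This yields $\bigl\|\int_x^{\infty}Z(x)Z(s)^{-1}R(s)y(s)\,\mathrm ds\bigr\|\le C\,\mathrm e^{-\operatorname{Re}c(x-a)/\omega}\int_x^{\infty}(1+(s-a)/\omega)^{k}\|R(s)\|\,\mathrm ds$ with $k=0$ in cases (i), (ii) and $k=1$ in case (iii), so the affine map is a contraction once $b$ is large; by Carath\'eodory's theorem for the linear system $y'=(\mathcal A_0+R)y$ the fixed point extends to a solution on all of $(a,\infty)$. Reading the bound back gives $\|y(x)\|\le C\mathrm e^{-\operatorname{Re}c(x-a)/\omega}$ (the first inequality in \eqref{phone}, and boundedness of $u_1$) and $\mathrm e^{\operatorname{Re}c(x-a)/\omega}\|y(x)-\widehat U_0(x)\|\le C\int_x^{\infty}(1+(s-a)/\omega)^{k}\|R(s)\|\,\mathrm ds\to0$, which is \eqref{pixel}.

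For the partner solution $v_1$ I would carry out the analogous construction with $\widehat V_0$ in the leading role. In case (ii) the situation is symmetric ($U_0,V_0$ bounded, $\operatorname{Re}c=0$), and the same ``integrate from $\infty$'' equation yields $\|(v_1,p_1v_1')^{\top}-(v_0,p_0v_0')^{\top}\|\to0$, i.e.\ \eqref{bbb}. In case (iii) I would again solve $y(x)=\widehat V_0(x)-\int_x^{\infty}Z(x)Z(s)^{-1}R(s)y(s)\,\mathrm ds$, but now in the weighted space of functions $y$ with $\sup_{x\ge b}\|y(x)\|/(1+(x-a)/\omega)<\infty$; here the integrand accumulates two powers of $(1+(s-a)/\omega)$, one from the factor $V_0$ inside $Z(s)^{-1}$ and one because $y$ is comparable to $\widehat V_0$, so the second moment condition \eqref{xx1c} is exactly what forces the integral to converge and the map to contract, and one again reads off \eqref{bbb}. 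In case (i) only the growth bound in \eqref{phone} is asked of $v_1$; there the ``integrate from $\infty$'' integral need not converge, so instead I would solve $y(x)=\widehat V_0(x)+\int_b^{x}Z(x)Z(s)^{-1}R(s)y(s)\,\mathrm ds$ in the space of functions $y$ with $\sup_{x\ge b}\mathrm e^{-\operatorname{Re}c(x-a)/\omega}\|y(x)\|<\infty$, where the finite integral is bounded by $C\mathrm e^{\operatorname{Re}c(x-a)/\omega}\int_b^{\infty}\|R\|$, giving the second inequality in \eqref{phone}. Linear independence of $u_1$ and $v_1$ follows by comparing their Cauchy data at $x=b$: these equal $\widehat U_0(b)$ and $\widehat V_0(b)$ up to errors that are small relative to them once $b$ is large, while $\det(\widehat U_0(b),\widehat V_0(b))=W\ne0$. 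Finally, in cases (i) and (iii) the matrices $Z$ and $R$ are real by Lemma~\ref{pingpong}, so the successive approximations started from $\widehat U_0$, $\widehat V_0$ remain real-valued.

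The part I expect to be the real work is the bookkeeping of weights: for each of the six sub-cases (the three positions of $\lambda$ relative to $\sigma_{\mathrm{ess}}(H_0)$ times the two solutions) one must match the right growth-weighted Banach space with the right direction of integration in the variation-of-parameters formula, so that the relevant integral converges, the contraction constant is precisely the (weighted) $L^1$-tail of $R$ on $[b,\infty)$, and the moment order that appears is exactly the claimed one. The tightest case is $v_1$ at a band edge, where the Green's function produces the square $(1+(s-a)/\omega)^2$ and thereby forces \eqref{xx1c}.
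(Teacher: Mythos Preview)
Your proposal is correct and follows essentially the same route as the paper: rewrite $(\tau_1-\lambda)u=0$ as the perturbed first-order system, use the fundamental matrix built from $u_0,v_0$ to set up a Volterra integral equation, and solve it by a fixed-point/Neumann argument in the exponentially (or, at band edges, polynomially) weighted sup-norm, with the rank-one splitting of $Z(x)Z(s)^{-1}$ producing exactly the moment powers $k=0,1,2$. The only noteworthy technical deviations are that the paper uses the $1/n!$ Neumann-series estimate on all of $[a,\infty)$ rather than a contraction on $[b,\infty)$ plus extension, obtains the growth bound for $v_1$ in case~(i) by a direct Gronwall argument valid for \emph{any} solution, and for $v_1$ at a band edge avoids iterating in the linearly-growing space by writing $\xi=(I-T)^{-1}\widetilde T\phi+\phi$ (one application of $\widetilde T$ maps linear growth to bounded, after which the already-established bounded-space inverse applies); your direct iteration in the weighted space works equally well.
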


\begin{proof}
	Let $\lambda\in\mathbb R$. We consider the systems $\phi'=A\phi$ and $\xi'=(A+B)\xi$ corresponding to $(\tau_0-\lambda)u=0$ and $(\tau_1-\lambda)u=0$, respectively, where
	\begin{align*}
		A=\begin{pmatrix}
			0 & \frac{1}{p_0} \\ q_0 - \lambda r_0 & 0
		\end{pmatrix}\quad\text{and}\quad
		B=\begin{pmatrix}
		0 & \frac{1}{p_1}-\frac{1}{p_0} \\ (q_1-q_0) - \lambda (r_1-r_0) & 0
		\end{pmatrix}.
	\end{align*}
	From \eqref{xx1a} we obtain $\lVert B(\cdot)\rVert_{\mathbb C^{2\times 2}}\in L^1(a,\infty)$. With $u_0$ and $v_0$ from Lemma~\ref{pingpong}
	we consider the fundamental solution $\Phi$ of the system $\phi'=A\phi$ given by
	\begin{equation}
	\label{pillepalle}
	\Phi(x) = \begin{pmatrix}
		u_0(x) & v_0(x)\\ (p_0u_0')(x) & (p_0 v_0')(x)
		\end{pmatrix},\quad x\in (a,\infty),
	\end{equation}
	so that
	\begin{equation*}
		\bigl(\Phi(t)\bigr)^{-1} = \frac{1}{W(u_0,v_0)}\begin{pmatrix}
			(p_0v_0')(t) & -v_0(t)\\ -(p_0u_0')(t) & u_0(t)
			\end{pmatrix},\quad t\in (a,\infty),
	\end{equation*}
where $W$ is the Wronskian.
	With \eqref{quasiperSol1} in Lemma~\ref{pingpong} we estimate for all $x,t\in[a,\infty)$
	\begin{equation}
	\label{BigPhi}
		\lVert\Phi(x)(\Phi(t))^{-1}\rVert_{\mathbb C^{2\times 2}} \leq \tilde E \mathrm e^{\operatorname{Re} c \frac{t-x}{\omega}} \lVert U_0(x)\rVert_{\mathbb C^2} \lVert V_0(t)\rVert_{\mathbb C^2} + \tilde E \mathrm e^{\operatorname{Re} c \frac{x-t}{\omega}} \lVert U_0(t)\rVert_{\mathbb C^2} \lVert V_0(x)\rVert_{\mathbb C^2}
	\end{equation}
	where $\tilde E$ is a suitable positive constant.
	
	We show (i) and (ii). In this case, $\operatorname{Re} c \geq 0$ and $U_0$, $V_0$ are bounded. We consider the Banach space $\mathcal B$
	of all continuous $\mathbb C^2$-valued functions with exponential decay of order $-\operatorname{Re} c/\omega$, that is,
$$
\mathcal B :=
\left\{\xi : [a,\infty) \to \mathbb C^2\,\mbox{continuous}:\lVert\xi(x) \rVert_{\mathbb C^{2}} \leq \gamma
\mathrm{e}^{-\operatorname{Re} c\frac{x}{\omega}} \mbox{ for some }\gamma\geq 0 \mbox{ on } [a,\infty)
\right\}
$$
and the corresponding norm
	\begin{align*}
		\lVert\xi\rVert_{\mathcal B}:=\sup_{x\in[a,\infty)} \mathrm{e}^{\operatorname{Re} c\frac{x-a}{\omega}}\lVert\xi(x) \rVert_{\mathbb C^{2}}<\infty.
	\end{align*}
	For $\xi\in\mathcal B$ we define	
	\begin{equation}
		\label{volterra}
		(T \xi)(x) := -\Phi(x) \int_{x}^\infty (\Phi(t))^{-1} B(t) \xi(t)\,\mathrm dt,\quad x\in [a,\infty).
	\end{equation}
The integral in \eqref{volterra} converges. Indeed, the estimate in \eqref{BigPhi} yields
	\begin{equation}
		\label{BigPhi2}
		\lVert\Phi(x)(\Phi(t))^{-1}\rVert_{\mathbb C^{2\times 2}} \leq E\mathrm{e}^{\operatorname{Re}c \frac{t-x}{\omega}}
	\end{equation}
	for $a\leq x\leq t<\infty$, where $E$ is a suitable positive constant.
Then \eqref{volterra} with \eqref{BigPhi2} give
\begin{equation}\label{jaok}
		\begin{split}
\mathrm{e}^{\operatorname{Re}c \frac{x-a}{\omega}}
\lVert(T\xi)(x)\rVert_{\mathbb C^{2}}
& \leq
\mathrm{e}^{\operatorname{Re}c \frac{x-a}{\omega}} \int_x^\infty
E\mathrm{e}^{\operatorname{Re}c \frac{t-x}{\omega}}
 \lVert B(t)\rVert_{\mathbb C^{2\times 2}}
  \lVert \xi(t)\rVert_{\mathbb C^{2}} \mathrm dt\\[1ex]
  & \leq
  \lVert\xi\rVert_{\mathcal B}\, E  \int_x^\infty
\lVert B(t)\rVert_{\mathbb C^{2\times 2}} \mathrm dt<\infty
		\end{split}
\end{equation}
and hence the integral in \eqref{volterra} exists. Moreover, we also conclude that $T\xi\in\mathcal B$ and $T$ is a bounded everywhere defined operator in $\mathcal B$.

We claim that for $n\in\mathbb N$ the estimate
	\begin{equation}
		\label{dot}
		\lVert(T^n\xi)(x)\rVert_{\mathbb C^{2}} \leq
		\mathrm{e}^{-\operatorname{Re}c \frac{x-a}{\omega}}
		\lVert\xi\rVert_{\mathcal B} \frac{1}{n!}\left(E\int_{x}^\infty \lVert B(t)\rVert_{\mathbb C^{2\times 2}}\,\mathrm dt\right)^n\!\!,\quad x\in[a,\infty),
	\end{equation}
	holds. In fact, for $n=1$ this is true by \eqref{jaok}.
	Now assume that \eqref{dot} holds for some $n\in \mathbb N$. We set
	$G(t):= \frac{1}{n+1}\left(E\int_{t}^\infty \lVert B(s)\rVert_{\mathbb C^{2\times 2}}\,\mathrm ds\right)^{n+1}$ and compute
\begin{equation*}
		\begin{split}
\lVert(T^{n+1}\xi)(x)\rVert_{\mathbb C^{2}} &\leq
 \int_x^\infty
 \lVert\Phi(x)(\Phi(t))^{-1}\rVert_{\mathbb C^{2\times 2}}
 \lVert B(t)\rVert_{\mathbb C^{2\times 2}}
  \lVert (T^n\xi)(t)\rVert_{\mathbb C^{2}} \mathrm dt\\[1ex]
  & \leq  \mathrm{e}^{-\operatorname{Re}c \frac{x-a}{\omega}}
  \int_x^\infty E
 \lVert B(t)\rVert_{\mathbb C^{2\times 2}}
  \lVert \xi\rVert_{\mathcal B}
 \frac{1}{n!}\left(E\int_{t}^\infty \lVert B(s)\rVert_{\mathbb C^{2\times 2}}\,\mathrm ds\right)^n
  \mathrm dt\\[1ex]
   & =  \mathrm{e}^{-\operatorname{Re}c \frac{x-a}{\omega}}
     \lVert \xi\rVert_{\mathcal B}  \frac{1}{n!}
  \int_x^\infty -G^\prime(t) \mathrm dt\\[1ex]
		 &=\mathrm{e}^{-\operatorname{Re}c \frac{x-a}{\omega}}
		\lVert\xi\rVert_{\mathcal B} \frac{1}{(n+1)!}\left(E\int_{x}^\infty \lVert B(t)\rVert_{\mathbb C^{2\times 2}}\,\mathrm dt\right)^{n+1}
		\end{split}
\end{equation*}
which shows \eqref{dot} for any $n\in\mathbb N$. Hence,
	\begin{equation*}
		\lVert T^n \xi \rVert_{\mathcal B} \leq \lVert\xi\rVert_{\mathcal B} \frac{1}{n!}\left(E\int_{a}^\infty \lVert B(t)\rVert_{\mathbb C^{2\times 2}}\,\mathrm dt\right)^n
	\end{equation*}
	and the Neumann series $(I-T)^{-1} = \sum_{n\in\mathbb N} T^n$ converges in the operator norm induced by $\lVert \cdot\rVert_{\mathcal B}$. Observe that
	for a solution $\phi\in\mathcal B$ of $\phi'=A\phi$ the function $\xi := (I-T)^{-1}\phi\in\mathcal B$ satisfies $\xi' = (A+B)\xi$ since
	\begin{equation}
	\label{klu}
		\xi = T\xi + \phi
	\end{equation}
	yields
	\begin{equation}
		\label{tom}
		\xi' = \Phi' \Phi^{-1} T\xi + B\xi + \phi' = A(T\xi + \phi) + B\xi = (A+B)\xi.
	\end{equation}
	Furthermore,  from \eqref{klu} and \eqref{jaok} we also conclude
	\begin{equation}
		\label{jerry}
		\mathrm{e}^{\operatorname{Re}c \frac{x-a}{\omega}}\lVert \phi(x) - \xi(x)\rVert_{\mathbb C^2} \rightarrow 0\quad\text{as }x\rightarrow \infty.
	\end{equation}
	
	Now let us consider the continuous function $(u_0,p_0 u_0')^\top:[a,\infty)\rightarrow\mathbb C^2$. According to Lemma \ref{pingpong} (i)--(ii) we have
  $(u_0,p_0 u_0')^\top\in\mathcal B$. From the above considerations we see that $(I-T)^{-1} (u_0,p_0 u_0')^\top$ is a solution of $\xi' = (A+B)\xi$ and hence
  \begin{equation*}
  \begin{pmatrix}u_1\\
p_1u_1'\end{pmatrix} :=
(I-T)^{-1}
  \begin{pmatrix}u_0\\
p_0u_0'\end{pmatrix}\in\mathcal B
  \end{equation*}
  gives a solution $u_1$ of $(\tau_1-\lambda)u=0$ such that the assertions in (i) and (ii) hold
  for $u_1$; note that \eqref{jerry} implies \eqref{pixel} and $(u_1,p_1 u_1')^\top\in\mathcal B$ shows the first inequality in \eqref{phone}.
  Observe, that if $\lambda$ is an interior point of $\sigma_{\mathrm{ess}}(H_0)$ then also $(v_0,p_0v_0')^\top \in\mathcal B$ by Lemma~\ref{pingpong}~(ii) as $\operatorname{Re} c =0$. Again it follows that
    $$
  \begin{pmatrix}v_1\\
p_1v_1'\end{pmatrix} := (I-T)^{-1}
  \begin{pmatrix}v_0\\
p_0v_0'\end{pmatrix}\in\mathcal B
  $$
  gives a solution $v_1$ of $(\tau_1-\lambda)u=0$ and \eqref{bbb} follows from \eqref{jerry}. Thus we have shown (ii) and it remains to
  check in (i) the second inequality in \eqref{phone}. In fact, for any solution $v_1$ of $(\tau_1-\lambda)u=0$ and $\xi=(v_1,p_1v_1')^\top$ one has
	\begin{equation*}
		\xi(x) = \Phi(x) \left(\bigl(\Phi(a)\bigr)^{-1}\xi(a) + \int_a^x \bigl(\Phi(t)\bigr)^{-1} B(t) \xi(t)\,\mathrm dt\right).
	\end{equation*}
	From \eqref{BigPhi} we obtain $\lVert\Phi(x)(\Phi(t))^{-1}\rVert_{\mathbb C^{2\times 2}} \leq E\mathrm{e}^{\operatorname{Re}c \frac{x-t}{\omega}}$
	for $a\leq t\leq x<\infty$ (cf. \eqref{BigPhi2}) with some $E>0$. Hence,
	\begin{equation*}
		\mathrm{e}^{-\operatorname{Re}c \frac{x-a}{\omega}}\lVert \xi(x)\rVert_{\mathbb C^2} \leq E\lVert\xi(a)\rVert_{\mathbb C^2} + E \int_a^x\lVert B(t)\rVert_{\mathbb C^{2\times 2}} \left( \mathrm{e}^{-\operatorname{Re}c \frac{t-a
		}{\omega}} \lVert \xi(t)\rVert_{\mathbb C^2}\right)\,\mathrm dt
	\end{equation*}
	for all $x\in [a,\infty)$. Now Gronwall's inequality yields
	\begin{equation*}
		\lVert \xi(x)\rVert_{\mathbb C^2} \leq \mathrm{e}^{\operatorname{Re}c \frac{x-a
		}{\omega}} E \lVert \xi(a)\rVert_{\mathbb C^{2}}
		\mathrm e^{E \int_a^x \lVert B(t)\rVert_{\mathbb C^{2\times 2}}\,\mathrm dt},
	\end{equation*}
	and hence the second inequality in \eqref{phone} holds for any solution $v_1$ of $(\tau_1-\lambda)u=0$. This completes the proof of (i) and (ii).

	We prove (iii). In the case $\lambda\in\mathbb \partial \sigma_{\mathrm{ess}}(H_0)$ Lemma~\ref{pingpong}~(iii) implies $\operatorname{Re} c =0$
	and the Banach space  $\mathcal B$ from above is the usual space of bounded continuous functions.
	Let $\phi\in\mathcal B$ and let $T$ be as in \eqref{volterra}.
	From Lemma~\ref{pingpong}~(iii) and \eqref{BigPhi} we obtain
	\begin{equation}\label{BigPhi3}
		\lVert\Phi(x)(\Phi(t))^{-1}\rVert_{\mathbb C^{2\times 2}}\leq E \left(1+\frac{t-a}{\omega}\right)
	\end{equation}
	for $a\leq x\leq t<\infty$ and hence
	\begin{equation}
		\label{pointqq}
		\lVert(T\phi)(x)\rVert_{\mathbb C^2}\leq \lVert\phi\rVert_{\mathcal B}\, E\int_x^\infty \left(1+ \frac{t-a}{\omega}\right)\lVert B(t)\rVert_{\mathbb C^{2\times 2}}\,\mathrm dt,
	\end{equation}
	where the integral converges since $(1+\vert \cdot \vert)\lVert B(\cdot)\rVert_{\mathbb C^{2\times 2}}\in L^1(a,\infty)$ by  \eqref{xx1b}.
	In the same way as in the proof of (i) and (ii) one verifies with $G(t)$ replaced by
	$H(t)=\frac{1}{n+1}\left(E\int_{t}^\infty (1+\frac{s-a}{\omega})\lVert B(s)\rVert_{\mathbb C^{2\times 2}}\,\mathrm ds\right)^{n+1}$ that
	\begin{equation*}
		\lVert(T^n\phi)(x)\rVert_{\mathbb C^2}\leq \lVert\phi\rVert_{\mathcal B} \frac{1}{n!}\left(E\int_x^\infty \left(1+ \frac{t-a}{\omega}\right)\lVert B(t)\rVert_{\mathbb C^{2\times 2}}\,\mathrm dt\right)^n
	\end{equation*}
	and
	\begin{equation*}
		\lVert(T^n\phi)\rVert_{\mathcal B}\leq \lVert\phi\rVert_{\mathcal B} \frac{1}{n!}\left(E\int_a^\infty \left(1+ \frac{t-a}{\omega}\right)\lVert B(t)\rVert_{\mathbb C^{2\times 2}}\,\mathrm dt\right)^n	
	\end{equation*}
	hold for all $n\in\mathbb N$ and $x\in[a,\infty)$. As above it follows that $(I-T)^{-1}$ is an everywhere defined bounded operator in $\mathcal B$
	and for a solution $\phi\in\mathcal B$ of $\phi'=A\phi$ the function $\xi=(I-T)^{-1}\phi\in\mathcal B$ satisfies \eqref{klu} and \eqref{tom}.
	Hence it follows from \eqref{pointqq} that \eqref{jerry} holds with $\operatorname{Re} c =0$.
	Now consider $(u_0,p_0u_0')^\top$, which is in $\mathcal B$ by Lemma~\ref{pingpong}~(iii), and set
	 \begin{equation}\label{hirter}
  \begin{pmatrix}u_1\\
p_1u_1'\end{pmatrix} :=
(I-T)^{-1}
  \begin{pmatrix}u_0\\
p_0u_0'\end{pmatrix}\in\mathcal B.
  \end{equation}
	Then $u_1$ is a solution of $(\tau_1-\lambda)u=0$ and the assertions for $u_1$ in (iii) follow.
	
	Now assume that the integrability condition \eqref{xx1c} (and hence also \eqref{xx1a} and \eqref{xx1b}) holds. Then $(1+\vert\cdot\vert^2)\lVert B(\cdot)\rVert_{\mathbb C^{2\times 2}}\in L^1(a,\infty)$ and for
    continuous functions $\xi:[a,\infty)\rightarrow\mathbb C^2$ such that
	\begin{equation}\label{popl}
		C_\xi:=\sup_{x\in[a,\infty)} \left(1+\tfrac{x-a}{\omega}\right)^{-1} \lVert\xi(x) \rVert_{\mathbb C^{2}}<\infty
	\end{equation}
	we can consider the integral \eqref{volterra}, where we shall use the notation $\widetilde T$ to distinguish from the operator $T$ acting in the
	Banach space $\mathcal B$.
	In fact, by \eqref{BigPhi3} we have
	\begin{equation}
		\label{kvatch}
		\lVert(\widetilde T\xi)(x)\rVert_{\mathbb C^2}\leq E \, C_\xi\, \int_x^\infty \left(1+ \frac{t-a}{\omega}\right)^2\lVert B(t)\rVert_{\mathbb C^{2\times 2}}\,\mathrm dt
	\end{equation}
	for $x\in(a,\infty)$ and hence $\widetilde T\xi\in\mathcal B$.
	Now let $\phi = (v_0,p_0v_0')^\top$ and observe that by Lemma~\ref{pingpong}~(iii) $\phi$ satisfies an estimate of the form \eqref{popl}.
	The function $\xi:= (I-T)^{-1} \widetilde T \phi + \phi$ also satisfies \eqref{popl} and $\xi-\phi = (I-T)^{-1}\widetilde T\phi\in\mathcal B$ . Hence,
	\begin{equation*}
		\widetilde T\phi =(I-T)(\xi-\phi) = (\xi-\phi) - \widetilde T(\xi-\phi) = \xi -\phi - \widetilde T\xi + \widetilde T\phi,
	\end{equation*}
	which implies
	\begin{equation}\label{ay}
		\xi = \phi + \widetilde T\xi.
	\end{equation}
	 As in \eqref{tom} we see that $\xi$ solves $\xi' = (A+B)\xi$ and hence $\xi=(v_1,p_1v_1')^\top$ with some solution $v_1$ of $(\tau_1-\lambda)u=0$. From \eqref{kvatch} and \eqref{ay} we obtain $\lVert \phi(x) - \xi(x)\rVert_{\mathbb C^2} \rightarrow 0$ as $x\rightarrow \infty$, which shows \eqref{bbb}.
	 To see that $v_1$ and $u_1$ in the present situation are linearly independent assume the contrary. Then also
	 $(v_1,p_1v_1')^\top$ and $(u_1,p_1u_1')^\top$ are multiples of each other and hence $(v_1,p_1v_1')^\top\in\mathcal B$. But then also
	 $$(I-\widetilde T)(v_1,p_1v_1')^\top = (I-T)(v_1,p_1v_1')^\top = (v_0,p_0v_0')^\top$$ and
	 $(I-T)(u_1,p_1u_1')^\top =(u_0,p_0u_0')^\top$ (see \eqref{hirter}) are multiples of each other; a contradiction.

	Note that in the cases (i) and (iii) the solutions $u_0$ and $v_0$ from Lemma~\ref{pingpong} can be chosen to be real-valued.
	Then $\Phi$ in \eqref{pillepalle} has values in $\mathbb R^{2\times 2}$ and the solution $u_1$ and $v_1$ in (i) and (iii)
	constructed via $T$ in \eqref{volterra} are also real-valued.
\end{proof}

\begin{proof}[Proof of Theorem~\ref{thm1a}]
	For $\lambda\in\mathbb R$ let $c=c(\lambda)$ and $u_j=u_j(\cdot,\lambda), v_j(\cdot,\lambda)$, $j=0,1$, be as in Lemma~\ref{pingpong} and Lemma~\ref{lachs}. The proof is divided into four steps.
	\vskip 0.2cm\noindent
	\emph{Step 1.} Let $\lambda$ be an arbitrary element of the non-empty
interior of $\sigma_{\mathrm{ess}}(H_0)$, that is, $\operatorname{Re} c
= 0$ by Lemma~\ref{pingpong}~(ii). We show that for every nonzero
solution $w_1$ of $(\tau_1-\lambda)u=0$ there exist positive constants
$E_1$ and $E_2$ such that
        \begin{align}
        \label{jenawest}
        E_1\leq\int_{a+n\omega}^{a+(n+1)\omega} \vert w_1(t)\rvert^2
r_1(t)\,\mathrm dt\leq E_2
        \end{align}
        holds for all sufficiently large $n\in\mathbb N$. Fix an arbitrary
nontrivial linear combination $w_1=\alpha u_1 + \beta v_1$,
$\alpha,\beta\in\mathbb C$. For the same constants $\alpha$ and $\beta$
let $w_0=\alpha u_0+\beta v_0$. From Lemma~\ref{pingpong}~(ii) and the periodicity of $U_0$ and $V_0$
  we obtain for $n\in\mathbb N$ and $t\in [a,\infty)$
  $$
  u_0(t+n\omega) =\mathrm{e}^{-nc} u_0(t)\quad\mbox{and}\quad
   v_0(t+n\omega) =\mathrm{e}^{nc} v_0(t).
  $$
 This, $\lvert \mathrm{e}^{-nc}\rvert = \lvert\mathrm{e}^{nc}\rvert =1$, and the periodicity of $r_0$ imply
        \begin{align*}
                \int_{a+n\omega}^{a+(n+1)\omega} \lvert w_0(t)\rvert^2r_0(t)\,\mathrm
dt &\leq  2\int_{a+n\omega}^{a+(n+1)\omega} \bigl(\lvert\alpha u_0(t)\rvert^2+ \lvert\beta
v_0(t)\rvert^2\bigr)r_0(t)\,\mathrm dt\\
&= 2 \int_{a}^{a+\omega} \bigl(\lvert\alpha
\mathrm{e}^{-nc} u_0(t)\rvert^2+\lvert\beta \mathrm{e}^{nc}
v_0(t)\rvert^2\bigr)r_0(t)\,\mathrm dt,
        \end{align*}
and hence together with \eqref{blin} we conclude
        \begin{align*}
                E\leq \int_{a+n\omega}^{a+(n+1)\omega} \lvert
w_0(t)\rvert^2r_0(t)\,\mathrm dt\leq E'
        \end{align*}
        for some $E,E'>0$ and all $n\in\mathbb N$. Furthermore,
        \begin{equation}
        \label{takkitakki}
        \begin{split}
        \bigl\lvert \lvert w_1\rvert^2r_1 - \lvert w_0\rvert^2r_0\bigr\rvert &=
\bigl\lvert \lvert w_1\rvert^2 (r_1-r_0) + \bigl(\lvert w_1\rvert^2 -
\lvert w_0\rvert^2\bigr)r_0\bigr\rvert\\
        &\leq \lvert w_1\rvert^2\lvert r_1-r_0\rvert + \lvert w_1
-w_0\rvert\bigl(\lvert w_1\rvert + \lvert w_0\rvert\bigr) r_0
        \end{split}
        \end{equation}
        holds pointwise a.\,e.\ on $(a,\infty)$.
        By Lemma~\ref{pingpong}~(ii) and Lemma~\ref{lachs}~(ii) the solutions
$w_0$, $w_1$ are bounded and
        \begin{equation*}
                \lvert w_1(x)-w_0(x)\rvert \leq \lvert \alpha\rvert \cdot\lvert
u_1(x)-u_0(x)\rvert + \lvert\beta\rvert \cdot \lvert v_1(x)-v_0(x)\rvert
\rightarrow 0,\quad\text{as } x\rightarrow\infty
        \end{equation*} by \eqref{pixel} and \eqref{bbb}. Thus,
\eqref{takkitakki} together with $r_1-r_0\in L^1(a,\infty)$ and the
periodicity of $r_0$ imply the existence of $n_0\in\mathbb N$ such that
        \begin{align*}
                \left\lvert\int_{a+n\omega}^{a+(n+1)\omega} \lvert w_1(t)\rvert^2
r_1(t)\,\mathrm dt - \int_{a+n\omega}^{a+(n+1)\omega} \lvert
w_0(t)\rvert^2 r_0(t)\,\mathrm dt\right\rvert\leq \frac{E}{2}
        \end{align*}
        for all $n\geq n_0$. Choosing $E_1=\frac{E}{2}$ and $E_2=E' +
\frac{E}{2}$ shows \eqref{jenawest} for all $n\geq n_0$.

        As an immediate consequence, $\tau_1$ is in the limit-point case at
$\infty$ and no non-trivial solution of $(\tau_1-\lambda)u=0$ is in
$L^2((a,\infty);r_1)$,
        and thus $\lambda\in\sigma_{\mathrm{ess}}(H_1)$; cf.\
\cite[Theorem~11.5]{Weidmann87}. Since the essential spectra are closed
sets we obtain
        \begin{equation*}
                \sigma_{\mathrm{ess}}(H_0)\subset \sigma_{\mathrm{ess}}(H_1).
        \end{equation*}
\noindent
\emph{Step 2.} Let $\lambda$ be an arbitrary element of the non-empty
interior of $\sigma_{\mathrm{ess}}(H_0)$.
        We prove now the statement on the absolute continuous spectrum of $H_1$.
	   A non-trivial solution $u$ of $(\tau_1-\lambda)u=0$ for real $\lambda$ is called \emph{sequentially subordinant} at $\infty$ with respect to another non-trivial solution $v$ of $(\tau_1-\lambda)u=0$ if
	\begin{equation*}
		\liminf_{x\rightarrow \infty} \frac{\int_a^x \lvert u(t)\rvert^2 r_1(t)\,\mathrm d t}{\int_a^x  \lvert v(t)\rvert^2 r_1(t)\,\mathrm d t}=0,
	\end{equation*}
see \cite[Section~9.5]{te} and also \cite{sst}.
	By \eqref{jenawest} in the first step of proof above we see that for all interior points $\lambda$ of $\sigma_{\mathrm{ess}}(H_1)$ no sequentially subordinate solution of $(\tau_1-\lambda)u=0$ exists. Standard subordinancy theory (cf.\ Theorem~9.27 together with the remark below in \cite{te}) implies that the absolutely continuous spectrum of $H_1$ equals $\sigma_{\mathrm{ess}}(H_1)$ and the interior of $\sigma_{\mathrm{ess}}(H_1)$ is purely absolutely continuous.
\vskip 0.2cm\noindent
	\emph{Step 3.} We proceed to prove the converse inclusion $\sigma_{\mathrm{ess}}(H_1)\subset \sigma_{\mathrm{ess}}(H_0)$.
	Suppose $\lambda \not\in \sigma_{\mathrm{ess}}(H_0)$, that is, $\operatorname{Re} c> 0$
by Lemma~\ref{pingpong}~(i).
By Lemma~\ref{lachs}~(i)
	there exist real-valued solutions $u_1$ and $v_1$. For $g\in L^2((a,\infty);r_1)$ set
	\begin{equation*}
		(Sg)(x):=\frac{1}{W(u_1,v_1)} \int_{a}^\infty G(x,t) g(t) r_1(t)\,\mathrm d t, \quad 	G(x,t) := \begin{cases}
			u_1(x)v_1(t) & \text{if }a\leq t\leq x, \\
			u_1(t)v_1(x) & \text{if }a\leq x\leq t,
		\end{cases}
	\end{equation*}
	that is
	\begin{equation}
		\label{S}
		(Sg)(x) = \frac{1}{W(u_1,v_1)} \left(u_1(x)\int_a^x v_1(t)g(t)r_1(t)\,\mathrm d t + v_1(x)\int_x^\infty u_1(t)g(t)r_1(t)\,\mathrm d t \right),
	\end{equation}
where $W$ stands again for the Wronskian.
	Define
	\begin{equation*}
		E:=\sup_{n\in \mathbb N}\int_{a+n\omega}^{a+(n+1)\omega} r_1(t)\,\mathrm d t,
	\end{equation*}
	which is finite since $r_0-r_1\in L^1(a,\infty)$ and $r_0$ is periodic and locally integrable.
	Consider an arbitrary $x\in [a,\infty)$. By \eqref{phone} in Lemma~\ref{lachs}~(i)
	\begin{equation*}
		\int_{a}^\infty \lvert G(x,t) \rvert r_1(t)\,\mathrm d t \leq C^2\left(\int_{a}^x \mathrm{e}^{\operatorname{Re} c\frac{t-x}{\omega}} r_1(t)\,\mathrm d t + \int_{x}^\infty \mathrm{e}^{\operatorname{Re}c\frac{x-t}{\omega}} r_1(t)\,\mathrm d t\right).
	\end{equation*}
	Let $k\in\mathbb N$ with $k\omega +a \leq x < (k+1)\omega +a$. We continue estimating
	\begin{equation*}
		\begin{aligned}
			\int_{a}^\infty \lvert G(x,t) \rvert r_1(t)\,\mathrm d t \leq{ } & C^2\sum_{n=0}^{k} \mathrm{e}^{\operatorname{Re}c\cdot(1-n)} \int_{a+(k-n)\omega}^{a+(k+1-n)\omega} r_1(t)\,\mathrm d t\\[0.5\baselineskip]
			&{ }+ C^2\sum_{n=0}^{\infty} \mathrm{e}^{\operatorname{Re}c\cdot(1-n)} \int_{a+(n+k)\omega}^{a+(n+1+k)\omega} r_1(t)\,\mathrm d t \\[0.5\baselineskip]
			\leq{ } & 2C^2 E\sum_{n=0}^\infty \mathrm{e}^{\operatorname{Re}c\cdot(-n+1)}<\infty.
		\end{aligned}
	\end{equation*}
	Due to the symmetry $G(x,t)=G(t,x)$ the same bound holds for $\int_{a}^\infty\lvert G(x,t) \rvert r_1(x)\,\mathrm d x$ evaluated at $t\in [a,\infty)$. As a consequence of the Schur criterion (see, e.\,g., \cite[Lemma~0.32]{te}) one obtains that $S$ is a bounded operator in $L^2((a,\infty); r_1)$. For $g\in L^2((a,\infty); r_1)$ a straightforward calculation using \eqref{S} and $(\tau_1-\lambda)u_1=(\tau_1-\lambda)v_1=0$ shows that $Sg$, $p_1 (Sg)'$ are absolutely continuous
	on $(a,\infty)$, and that $Sg$ solves the inhomogeneous differential equation $(\tau_1-\lambda)u=g$. Thus, $\tau_1 (Sg)= \lambda Sg + g\in L^2((a,\infty); r_1)$ and hence
	$Sg$ is in the domain of the maximal operator
associated to  $\tau_1$ in $L^2((a,\infty); r_1)$ and $S$ is injective. Moreover,
since $u_1$ and $v_1$ are real-valued it follows that $S$ is self-adjoint, so that $S^{-1}$ is a self-adjoint restriction
of the maximal operator associated with $\tau_1-\lambda$. In other words, $S$ is the resolvent at $\lambda$ of some self-adjoint realization of $\tau_1$ and
as all self-adjoint
realizations of $\tau_1$ have the same essential spectrum, we obtain
$$
\lambda \notin \sigma_{\mathrm{ess}}(H_1).
$$
Thus $\sigma_{\mathrm{ess}}(H_1)\subset \sigma_{\mathrm{ess}}(H_0)$ and together with the first step
	\begin{equation*}\sigma_{\mathrm{ess}}(H_1)= \sigma_{\mathrm{ess}}(H_0).\end{equation*}
	\noindent
	\emph{Step 4.} Recall that the periodic Sturm--Liouville operator $H_0$ is semibounded from below. Let $\lambda<\inf \sigma_{\mathrm{ess}}(H_{0})$, that is, $\operatorname{Re} c>0$ by Lemma~\ref{pingpong}~(i). It is no restriction to assume that the solutions $u_0$ and $u_1$ provided by Lemma~\ref{pingpong}~(i) and Lemma~\ref{lachs}~(i) are real-valued. Since $H_0$ is semibounded from below
 the differential expression $\tau_0-\lambda$ is non-oscillatory (see \cite[Theorem~14.9]{Weidmann87}), that is,
  $u_0$ has at most finitely many zeros in $(a,\infty)$. Furthermore, Lemma~\ref{pingpong}~(i) implies that the function $\tilde u_0$ given by
	\begin{equation*}
		\tilde u_0(x) = \mathrm{e}^{c\frac{x-a}{\omega}} u_0(x)
	\end{equation*}
	is $\omega$-periodic. Therefore, the solution $u_0$ has no zeros and
	\begin{equation*}
		\gamma:= \inf_{t\in (a,\infty)} \vert \tilde u_0(t) \rvert = \min_{t\in [a,a+\omega]} \vert \tilde u_0(t) \rvert>0.
	\end{equation*}
	Assume that $H_1$ is not semibounded from below. Then \cite[Theorem~14.9]{Weidmann87} implies that the differential expression $\tau_1-\lambda$ is oscillatory,
	and hence the solution $u_1$ of $(\tau_1-\lambda)u=0$ has infinitely many zeros $x_0<x_1<x_2<\dots$ accumulating at $\infty$. Together with \eqref{pixel} we obtain
	\begin{equation*}
		0<\gamma\leq \vert \tilde u_0(x_n) \rvert = \lvert\mathrm{e}^{c\frac{x_n-a}{\omega}}u_0(x_n)\rvert = \mathrm{e}^{\operatorname{Re} c\frac{x_n-a}{\omega}}\,\vert u_0(x_n)-u_1(x_n) \rvert\rightarrow 0\quad\text{as }n\rightarrow\infty;
	\end{equation*}
	a contradiction. This shows the semiboundedness of $H_1$.
\end{proof}

\begin{proof}[Proof of Theorem~\ref{thm2a}]
Suppose that  \eqref{xx1b} (and hence also \eqref{xx1a}) holds. We show that every gap of the essential spectrum of $H_1$ contains at most finitely many eigenvalues of $H_1$. The proof is similar as in Step 4 in the proof of Theorem~\ref{thm1a}, but instead of the zeros of solutions we consider the zeros of modified Wronskians.
	Let $\mu$,~$\lambda\in\mathbb R$ such that $\mu<\lambda$ with $\sigma_{\mathrm{ess}}(H_0)\cap (\mu,\lambda)=\sigma_{\mathrm{ess}}(H_1)\cap (\mu,\lambda) =\emptyset$. We have
	\begin{equation*}
		\lambda,\,\mu\in \partial \sigma_{\mathrm{ess}}(H_0) \cup \big(\mathbb R\setminus\sigma_{\mathrm{ess}}(H_0)\big).
	\end{equation*}
	Let $c(\lambda)$, $c(\mu)$ be the Floquet exponents associated with $(\tau_0-\lambda)u=0$ and $(\tau_0-\mu)u=0$, respectively. For the real-valued solutions $u_j(\cdot,\lambda)$ and $u_j(\cdot,\mu)$, where $j=0$, $1$, provided by Lemma~\ref{pingpong}~(i),~(iii) and Lemma~\ref{lachs}~(i),~(iii) we consider the modified Wronskians
	\begin{equation*}
		W_j(x) :=  W(u_j(\cdot,\mu),u_j(\cdot,\lambda))(x) = \begin{pmatrix}u_j(x,\lambda)\\ p_j(x)u_j'(x,\lambda)\end{pmatrix}^\top \begin{pmatrix}
			0 & -1 \\ 1 & 0
		\end{pmatrix}
		\begin{pmatrix}u_j(x,\mu) \\ p_j(x)u_j'(x,\mu)\end{pmatrix}
	\end{equation*}
	Observe that
	\begin{equation}
		\label{feynmann}
		\widetilde W_0(x) := \exp\left(\bigl(c(\lambda)+c(\mu)\bigr)\frac{x-a}{\omega}\right) W_0(x) = \bigl(U_0(x,\lambda)\bigr)^\top \begin{pmatrix}
			0 & -1 \\ 1 & 0
		\end{pmatrix} U_0(x,\mu),
	\end{equation}
	where $U_0(\cdot,\lambda)$ and $U_0(\cdot,\mu)$ are $\omega$-periodic functions given by \eqref{quasiperSol1} in Lemma~\ref{pingpong}. Therefore, the function
	$\widetilde W_0$ is $\omega$-periodic. Since there is at most one simple eigenvalue of $H_0$ in $(\mu,\lambda)$ we conclude from
	\cite[Theorem~7.5~(i)]{GesztesySimonTeschl96}
	that  $W_0$ has at most finitely many zeros in $(a,\infty)$. According to the periodicity of $\widetilde W_0$ together with \eqref{feynmann}, the modified
	Wronskian $W_0$ has no zeros and
	\begin{equation*}
		\gamma:= \inf_{t\in (a,\infty)} \vert \widetilde W_0(t) \rvert = \min_{t\in [a,a+\omega]} \vert \widetilde W_0(t) \rvert>0.
	\end{equation*}
	The difference of $W_0$ and $W_1$ can be written as
	\begin{equation*}
		\begin{split}
		W_0(x)-W_1(x) ={ }& \left(\begin{pmatrix}u_0(x,\lambda)\\ (p_0(x)u_0'(x,\lambda)\end{pmatrix} - \begin{pmatrix}u_1(x,\lambda)\\ p_1(x)u_1'(x,\lambda)\end{pmatrix}\right)^\top
		\begin{pmatrix}
			0 & -1 \\ 1 & 0
		\end{pmatrix}
		\begin{pmatrix}u_0(x,\mu) \\ p_0(x)u_0'(x,\mu)\end{pmatrix} \\[0.5\baselineskip]
		&{ }+
		\begin{pmatrix}u_1(x,\lambda)\\ p_1(x)u_1'(x,\lambda)\end{pmatrix}^\top \begin{pmatrix}
			0 & -1 \\ 1 & 0
		\end{pmatrix}
		\left(\begin{pmatrix}u_0(x,\mu) \\ p_0(x)u_0'(x,\mu)\end{pmatrix} - \begin{pmatrix}u_1(x,\mu) \\ p_1(x)u_1'(x,\mu)\end{pmatrix}\right).
	\end{split}
	\end{equation*}
	Combining this with Lemma \ref{pingpong}~(i),~(iii) and Lemma \ref{lachs}~(i),~(iii) we conclude
	\begin{equation}
		\label{gruenesfahrrad}
		\exp\left(\bigl(c(\lambda)+c(\mu)\bigr)\frac{x-a}{\omega}\right)\cdot \left(W_0(x)-W_1(x)\right)
		\rightarrow 0\quad\text{as }x\rightarrow \infty.
	\end{equation}
	
	Now assume that $H_1$ has infinitely many eigenvalues in $(\mu,\lambda)$. Then the
	modified Wronskian $W_1$ has infinitely many zeros $x_0<x_1<x_2<\dots$ which necessarily accumulate at $\infty$; cf. \cite[Theorem~7.5~(i)]{GesztesySimonTeschl96}. Then \eqref{gruenesfahrrad} implies
	\begin{equation*}
		\begin{split}
		0<\gamma \leq \vert \widetilde W_0(x_n) \rvert &= \lvert\exp\left(\bigl(c(\lambda)+c(\mu)\bigr)\frac{x_n-a}{\omega}\right) W_0(x_n)\rvert\\[0.5\baselineskip]
		&= \lvert\exp\left(\bigl(c(\lambda)+c(\mu)\bigr)\frac{x_n-a}{\omega}\right) \bigl(W_0(x_n)-W_1(x_n)\bigr)\rvert\rightarrow 0\quad \text{as }n\rightarrow \infty;
		\end{split}
	\end{equation*}
	a contradiction. Hence, $\dim\operatorname{ran} (P_{(\mu,\lambda)}(H_1))<\infty$.
 \end{proof}

\begin{proof}[Proof of Theorem~\ref{thm3a}]
 Suppose that \eqref{xx1c} (and hence also \eqref{xx1a} and \eqref{xx1b}) holds.
  We show that the boundary points of the essential spectrum of $H_1$ are no eigenvalues of $H_1$ and, therefore, $\sigma_{\mathrm{ess}}(H_1)$ is purely absolutely continuous. Let $\lambda\in \partial \sigma_{\mathrm{ess}}(H_1)$ and consider an arbitrary non-trivial linear combination $w_1:=\alpha u_1 + \beta v_1$, where $\alpha,\beta\in\mathbb C$. For the same coefficients $\alpha,\beta$ let $w_0:=\alpha u_0 + \beta v_0$ and observe that by Lemma~\ref{lachs}~(iii)
  \begin{equation}\label{www}
   w_1(x)-w_0(x)\rightarrow 0 \quad\text{and hence}\quad \lvert w_1(x)\rvert^2- \lvert w_0(x)\rvert^2\rightarrow 0\quad \text{as }x\rightarrow \infty.
  \end{equation}
 We estimate with \eqref{mint} and the boundedness of $|u_0|$ from Lemma~\ref{pingpong}~(iii) for some $M>0$ and all $t\in[a,\infty)$
  \begin{equation*}
 |w_0(t)|^2\leq\left( |\alpha| |u_0(t)| + |\beta|  C \left(1+\frac{t-a}{\omega}\right) \right)^2\leq M(1+ t^2),
 \end{equation*}
 and hence
 \begin{equation}\label{MiGente2}
 \lvert w_0(t)\rvert^2 \lvert r_1(t)-r_0(t)\rvert\leq M(1+ t^2) \lvert r_1(t)-r_0(t)\rvert.
 \end{equation}
 Moreover,
 \begin{equation}\label{takkitakki2}
	\left| \lvert w_1\rvert^2 r_1- \lvert w_0\rvert^2 r_0\right|
  \leq  \left|  \lvert w_1\rvert^2- \lvert w_0\rvert^2\right| \lvert r_1-r_0\rvert +
  \lvert w_0\rvert^2 \lvert r_1-r_0\rvert + \left|  \lvert w_1\rvert^2- \lvert w_0\rvert^2\right| r_0\\
	\end{equation}
holds pointwise a.\,e.\ on $(a,\infty)$
and by \eqref{xx1c} the functions $t\mapsto t^2\lvert r_1(t)-r_0(t)\rvert$
and $t\mapsto \lvert r_1(t)-r_0(t)\rvert$ are in $L^1(a,\infty)$.
        Thus,
\eqref{takkitakki2} together with \eqref{www}, \eqref{MiGente2}, and the
periodicity of $r_0$ imply the existence of $n_0\in\mathbb N$ such that for all $n\geq n_0$
        \begin{align*}
                \left\lvert\int_{a+n\omega}^{a+(n+1)\omega} \lvert w_1(t)\rvert^2
r_1(t)\,\mathrm dt - \int_{a+n\omega}^{a+(n+1)\omega} \lvert
w_0(t)\rvert^2 r_0(t)\,\mathrm dt\right\rvert\leq \frac{E}{2},
        \end{align*}
      where the constant $E$ is from \eqref{blin}.   This gives for all $n\geq n_0$
	\begin{equation*}
 \int_{a+n\omega}^{a+(n+1)\omega} \lvert w_1(t)\rvert^2 r_1(t)\,\mathrm dt\geq \frac{E}{2}.
	\end{equation*}
	  Therefore, $w_1$ does not belong to $L^2((a,\infty);r_1)$, which shows that $\lambda\in \partial \sigma_{\mathrm{ess}}(H_1)$ is not an eigenvalue of $H_1$.
\end{proof}

	  \begin{proof}[Proof of Proposition~\ref{propi}]
 Suppose that \eqref{xx1b} (and hence also \eqref{xx1a}) holds and that $r_1$ satisfies $C_0 \le r_1(t) \le C_1$ for $t$ in some neighbourhood of $\infty$ for some positive constants $C_0$, $C_1$.
 Let $\lambda$ be a boundary point of $\sigma_{\mathrm{ess}}(H_1)$, let $u_1=u_1(\cdot,\lambda)$ be the solution found in Lemma~\ref{lachs} (iii), and
 suppose $v_1=v_1(\cdot,\lambda)$ were an eigenfunction. Then, by \eqref{pixel} and
 \eqref{blin}, $u_1$ and $v_1$ must be linearly independent and we can rescale $v_1$ such that
 the Wronskian with $u_1$ satisfies
 \begin{equation*}
 1 = W(u_1, v_1) = u_1 (p_1 v_1') -( p_1 u_1') v_1.
 \end{equation*}
In particular, we obtain
 \begin{equation*}
 \frac{1}{2} \leq r_1 u_1^2 \frac{(p_1 v_1')^2}{r_1} + r_1 v_1^2 \frac{(p_1 u_1')^2}{r_1}
 \end{equation*}
 Now since $v_1$ is an eigenfunction, we have $r_1 v_1^2 \to 0$ (at least for some subsequence).
 Moreover, by \eqref{pixel} and our assumption on $r_1$ both $r_1 u_1^2$ and $(p_1 u_1')^2/r_1$ are bounded.
 Finally, the assumption $\int_{t-1}^{t+1} |q_1(s)|^2 ds \le C_1$ together with the other assumptions on $r_1$ and $p_1$ ensure that
 the first integral on the right hand side of \cite[Eq. (2.21) in Lemma 2.7]{sst} is bounded and hence this lemma implies
 $(p_1 v_1')^2/r_1 \to 0$, which gives a contradiction. Thus, there is no square summable solution for $\lambda$.
\end{proof}

\section{Proof of the main results}

\begin{proof}[Proofs of  Theorem~\ref{thm1}--\ref{thm3}]
Our main results follow from
a coupling argument and applications of Theorems~\ref{thm1a}, \ref{thm2a} and~\ref{thm3a} and their counterparts on the half-line $(-\infty,a)$.
More precisely,
choose any self-adjoint realization  $A_{0,-}$ and $A_{0,+}$ of $\tau_0$
in $L^2((-\infty,a);r_0)$ and $L^2((a,\infty);r_0)$, respectively, and observe that the resolvent difference
of $A_{0}$ and $A_{0,-}\oplus A_{0,+}$ is an operator of rank one or rank two.
In particular, $A_{0}$ and $A_{0,-}\oplus A_{0,+}$ have the same essential spectrum, and the periodicity also
implies $\sigma_{\mathrm{ess}}(A_{0,-})=\sigma_{\mathrm{ess}}(A_{0,+})$.

Let $A_{1,-}$ and $A_{1,+}$ be self-adjoint realizations of $\tau_1$ in $L^2((-\infty,a);r_1)$ and $L^2((a,\infty);r_1)$, respectively.
It follows from  Theorem~\ref{thm1a} that $A_{1,\pm}$ are semibounded,
$\sigma_{\mathrm{ess}}(A_{0,\pm})=\sigma_{\mathrm{ess}}(A_{1,\pm})$, and hence $A_{1,-}\oplus A_{1,+}$ is semibounded and
 \begin{equation*}
\sigma_{\mathrm{ess}}(A_0) = \sigma_{\mathrm{ess}}(A_{0,-}\oplus A_{0,+})=
\sigma_{\mathrm{ess}}(A_{1,-}\oplus A_{1,+}).
\end{equation*}
As also the resolvent difference
of $A_{1}$ and $A_{1,-}\oplus A_{1,+}$ is an operator of rank one or rank two we conclude that
$A_1$ is semibounded and
\begin{equation*}
\sigma_{\mathrm{ess}}(A_0) =\sigma_{\mathrm{ess}}(A_1).
\end{equation*}
In order to prove  Theorem~\ref{thm1} it remains to show the statement on the
 absolutely continuous  spectrum of $A_1$. Let $\lambda$ be an interior point of $\sigma_{\mathrm{ess}}(A_1)$
 and let $u$ be a non-trivial solution of $(\tau_1-\lambda)u=0$. Step 2 of the proof of Theorem~\ref{thm1a} shows that the restrictions of $u$ onto $(-\infty,a)$ and $(a,\infty)$
 are not sequentially subordinant at $\pm \infty$ and from \cite[Theorem 2]{G05} we conclude that the
spectrum of $A_1$ is purely absolutely continuous
in the interior of the spectral
bands. This completes the proof of Theorem~\ref{thm1}. Arguing with the restrictions of $u$ onto $(-\infty,a)$ and $(a,\infty)$
in the same way as in the proof of Theorem~\ref{thm3a} we also conclude that the band edges are no eigenvalues of $A_1$ under the assumptions of Theorem~\ref{thm3}.
To conclude Theorem~\ref{thm2} note that by Theorem~\ref{thm2a} each gap
contains at most finitely many
eigenvalues of $A_{1,-}\oplus A_{1,+}$.
 As
the resolvent difference
of $A_{1}$ and $A_{1,-}\oplus A_{1,+}$ is at most of rank two
the number of eigenvalues of $A_1$ in each gap can increase
by at most two, which shows Theorem~\ref{thm2}.
\end{proof}

\end{document}